\pgfplotsset{compat=1.17}
\DeclareMathOperator{\sgn}{sgn}
\DeclareMathOperator{\tr}{tr}
\DeclareMathOperator{\divergence}{div}
\begin{document}

\title{Oscillations in planar deficiency-one mass-action systems
\thanks{BB was supported by the Austrian Science Fund (FWF), project P32532.}
}


\author{Bal\'azs Boros \and Josef Hofbauer}

\institute{
B. Boros \at
Faculty of Mathematics, University of Vienna, Austria\\
\email{balazs.boros@univie.ac.at}
\and
J. Hofbauer \at
Faculty of Mathematics, University of Vienna, Austria\\
\email{josef.hofbauer@univie.ac.at}
}

\date{Received: date / Accepted: date}

\maketitle

\begin{abstract}
Whereas the positive equilibrium of a mass-action system with deficiency zero is always globally stable, for deficiency-one networks there are many different scenarios, mainly involving oscillatory behaviour. We present several examples, with  centers or multiple limit cycles.
\keywords{limit cycles \and centers \and Li\'enard systems \and reversible systems \and  mass-action kinetics \and deficiency one}
\end{abstract}

\section{Introduction}

In this paper we study mass-action systems in dimension two  with a unique positive equilibrium and deficiency one. The deficiency is a non-negative integer associated to any chemical reaction network, and is explained in \Cref{sec:planar_mass_action}. If the deficiency is zero, the Deficiency-Zero Theorem gives a rather complete picture: existence and uniqueness of a positive equilibrium is nicely characterized through the underlying directed graph of the chemical network. And if it exists, it is globally asymptotically stable (at least in dimension two). In contrast, the Deficiency-One Theorem is a purely static statement: there is at most one positive equilibrium, and if it exists, it is regular.
However, nothing is said about the dynamic behaviour. In the present paper we take a step towards filling this gap, at least in dimension two.

We show that Andronov--Hopf bifurcations may occur, even generalized ones, that produce more than one limit cycle near the equilibrium, and also degenerate ones that produce centers.
Note that the uniqueness and regularity of the positive equilibrium rules out the classical fixed point bifurcations such as saddle--node and pitchfork bifurcations.

We start with a brief summary of reaction network theory in \Cref{sec:planar_mass_action}. Then in \Cref{sec:quadrangle} we study the simplest weakly reversible network with deficiency one, an irreversible cycle along a quadrangle. This is always permanent, but we present examples with up to three limit cycles. On the other hand, we also give a sufficient condition that guarantees global stability of the positive equilibrium for all rate constants.

In \Cref{sec:chain} we consider irreversible chains of three reactions. Even though this is not weakly reversible, a positive equilibrium may exist, and could be globally stable, or may be surrounded by up to three limit cycles. Furthermore, the equilibrium may be surrounded by a continuum of closed orbits and a homoclinic orbit.

In \Cref{sec:three_reactions} we study three separate reactions, and produce two types of centers, and an example with four limit cycles.

Finally, in \Cref{sec:zigzag} we give a simple example where existence of the positive equilibrium depends on the rate constants.

\section{Planar mass-action systems}
\label{sec:planar_mass_action}

In this section we briefly introduce mass-action systems and related notions that are necessary for our exposition. We restrict to the case of two species. For more details about mass-action systems, consult e.g. \cite{feinberg:1987}, \cite{gunawardena:2003}. The symbol $\mathbb{R}_+$ denotes the set of positive real numbers.

\begin{definition}
A {\emph{planar Euclidean embedded graph}} (or a \emph{planar reaction network}) is a directed graph $(V,E)$, where $V$ is a nonempty finite subset of $\mathbb{R}^2$.
\end{definition}

Denote by $(a_1,b_1)$, $(a_2,b_2), \ldots, (a_m,b_m)$ the elements of $V$, and by $\mathsf{X}$ and $\mathsf{Y}$ the two \emph{species}. Accordingly, we often refer to $(a_i,b_i)$ as $a_i\mathsf{X}+b_i\mathsf{Y}$. We assume throughout that the \emph{reaction vectors} $(a_j-a_i,b_j-b_i)\in \mathbb{R}^2$ ($(i,j)\in E$) span $\mathbb{R}^2$. The concentrations of the species $\mathsf{X}$ and $\mathsf{Y}$ at time $\tau$ are denoted by $x(\tau)$ and $y(\tau)$, respectively.

\begin{definition}
A {\emph{planar mass-action system}} is a triple $(V,E, \kappa)$, where $(V,E)$ is a reaction network and $\kappa\colon E \to \mathbb{R}_+$ is the collection of the \emph{rate constants}. Its \emph{associated differential equation} on $\mathbb{R}^2_+$ is
\begin{align}
\label{eq:mass_action_ode}
\begin{split}
\dot{x} &= \sum_{(i,j)\in E} (a_j-a_i) \kappa_{ij} x^{a_i} y^{b_i},\\    
\dot{y} &= \sum_{(i,j)\in E} (b_j-b_i) \kappa_{ij} x^{a_i} y^{b_i}.    
\end{split}
\end{align}
\end{definition}

We remark that the translation of a network by $(\alpha,\beta)\in\mathbb{R}^2$ (i.e., taking $(a_i+\alpha,b_i+\beta)$ instead of $(a_i,b_i)$ for $i = 1,2, \dots, m$) amounts to multiplying the differential equation \eqref{eq:mass_action_ode} by the monomial $x^\alpha y^\beta$, an operation that does not have any effect on the main qualitative properties. Thus, any behaviour shown in this paper can also be realized with $a_i, b_i\geq 0$ for all $i=1,2,\ldots,m$, a setting that is more standard in the literature.

In some cases, a network property alone has consequences on the qualitative behaviour of the differential equation \eqref{eq:mass_action_ode}. For instance, weak reversibility implies permanence \cite[Theorem 4.6]{craciun:nazarov:pantea:2013}. We now define these terms.

\begin{definition}
A planar mass-action system $(V,E,\kappa)$ is \emph{weakly reversible} if every edge in $E$ is part of a directed cycle.
\end{definition}

\begin{definition}
A planar mass-action system is \emph{permanent} if there exists a compact set $K\subseteq\mathbb{R}^2_+$ with the property that for each solution $\tau \mapsto (x(\tau),y(\tau))$ with $(x(0),y(0))\in\mathbb{R}^2_+$ there exists a $\tau_0\geq0$ such that $(x(\tau),y(\tau))\in K$ holds for all $\tau\geq\tau_0$.
\end{definition}

\begin{theorem}
\label{thm:wr_permanence}
Weakly reversible planar mass-action systems are permanent.
\end{theorem}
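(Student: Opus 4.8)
The plan is to deduce \Cref{thm:wr_permanence} from the general result of Craciun, Nazarov, and Pantea cited in the text, namely \cite[Theorem 4.6]{craciun:nazarov:pantea:2013}, which asserts that a weakly reversible mass-action system is permanent whenever it is two-dimensional, or more precisely whenever every stoichiometric compatibility class has dimension at most two. So the first step is to observe that in our planar setting the relevant state space is $\mathbb{R}^2_+$, which is already two-dimensional, hence the stoichiometric subspace has dimension at most two and the hypothesis of the cited theorem is automatically satisfied. This reduces the proof to checking that a ``planar mass-action system'' in the sense of our \Cref{sec:planar_mass_action} is a special case of the systems treated in \cite{craciun:nazarov:pantea:2013}.

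The second step is precisely that bookkeeping: a planar Euclidean embedded graph $(V,E)$ with $V\subseteq\mathbb{R}^2$ and rate constants $\kappa\colon E\to\mathbb{R}_+$ is, after identifying the complexes $(a_i,b_i)$ with the monomials $x^{a_i}y^{b_i}$, an ordinary mass-action system with species $\mathsf{X},\mathsf{Y}$; the associated differential equation \eqref{eq:mass_action_ode} is exactly the mass-action ODE, and weak reversibility as we defined it (every edge lies on a directed cycle) coincides with the standard notion of weak reversibility of the reaction graph. One subtlety to address is that we allow the complexes $(a_i,b_i)$ to have negative or non-integer coordinates, whereas the classical literature usually assumes $a_i,b_i\in\mathbb{Z}_{\ge0}$. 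Here I would invoke the remark already made in the excerpt: translating the network by a suitable $(\alpha,\beta)\in\mathbb{R}^2$ multiplies the vector field by the positive monomial $x^\alpha y^\beta$, which only reparametrizes time along trajectories in $\mathbb{R}^2_+$ and changes neither the forward-invariance of $\mathbb{R}^2_+$ nor the eventual location of orbits; and if one insists on integer exponents, the permanence statement of \cite{craciun:nazarov:pantea:2013} is in fact proved for real exponents (``power-law'' / generalized mass-action kinetics) with weakly reversible, two-dimensional networks, so it applies directly.

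The third step is to transfer the conclusion back: \cite[Theorem 4.6]{craciun:nazarov:pantea:2013} yields a compact set $K'\subseteq\mathbb{R}^2_+$ that eventually absorbs every trajectory starting in the interior of each compatibility class; since the reaction vectors span $\mathbb{R}^2$ by our standing assumption, there is a single compatibility class, namely all of $\mathbb{R}^2_+$, so $K'$ works uniformly and is exactly the set $K$ demanded by our definition of permanence. If one prefers a self-contained argument rather than a citation, the alternative is to reprove the needed case directly: construct a strict Lyapunov-type function on the boundary and at infinity using the fact that weak reversibility forces, for each face of $\mathbb{R}^2_{\ge0}$ at infinity and each coordinate axis, the existence of a reaction pointing back into the interior, then apply an average-Lyapunov-function / permanence theorem (e.g.\ in the style of Hofbauer). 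The main obstacle, and really the only nontrivial point, is the matching of hypotheses in step two — making sure the ``spanning reaction vectors'' assumption is what guarantees a single compatibility class and that the embedded-graph formalism with possibly negative coordinates is genuinely covered — since once the cited theorem applies, permanence is immediate.
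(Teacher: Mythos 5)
Your proposal is correct and matches the paper's treatment: the paper offers no independent proof of \Cref{thm:wr_permanence} but simply invokes \cite[Theorem 4.6]{craciun:nazarov:pantea:2013}, exactly as you do, and your hypothesis-checking (two-dimensionality, real exponents being covered by the power-law setting, and the spanning assumption giving a single compatibility class) is the same routine verification left implicit in the text.
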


We now recall two classical theorems on the number of positive equilibria for mass-action systems with low deficiency. The \emph{deficiency} of a planar reaction network $(V,E)$ is the non-negative integer $\delta = m - \ell - 2$, where $m = \lvert V \rvert$ and $\ell$ is the number of connected components of the directed graph $(V,E)$.

\begin{theorem}[Deficiency-Zero Theorem \cite{feinberg:1972}, \cite{horn:1972}, \cite{horn:jackson:1972}]
\label{thm:dfc0}
Assume that the deficiency of a planar mass-action system is zero. Then the following statements hold.
\begin{enumerate}[(i)]
\item There is no periodic solution that lies entirely in $\mathbb{R}^2_+$.
\item If the underlying network is weakly reversible then there exists a unique positive equilibrium. Furthermore, it is asymptotically stable.
\item If the underlying network is not weakly reversible then there is no positive equilibrium.
\end{enumerate}
\end{theorem}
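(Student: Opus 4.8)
The plan is to reduce the whole statement to the classical theory of complex‑balanced equilibria. Write the system \eqref{eq:mass_action_ode} in the compact form $\dot c = Y A_\kappa \Psi(c)$, where $c=(x,y)$, the columns of the $2\times m$ matrix $Y$ are the complexes $(a_i,b_i)$, the matrix $A_\kappa$ is the Laplacian‑type matrix of the $\kappa$‑weighted digraph $(V,E)$ whose $i$‑th column equals $\sum_{(i,j)\in E}\kappa_{ij}(e_j-e_i)$, and $\Psi(c)$ is the vector with entries $x^{a_i}y^{b_i}$. The deficiency‑zero hypothesis is equivalent to $\ker Y \cap \lspan\{\,e_j-e_i:(i,j)\in E\,\}=\{0\}$ (indeed $\delta$ always equals the dimension of this intersection); since every column of $A_\kappa$ lies in that span, it follows in particular that $\ker Y\cap \operatorname{im}A_\kappa=\{0\}$. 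This single observation does most of the work.

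First I would prove (iii). If $c\in\mathbb{R}^2_+$ is a positive equilibrium then $YA_\kappa\Psi(c)=0$, so $A_\kappa\Psi(c)\in\ker Y\cap\operatorname{im}A_\kappa=\{0\}$, i.e.\ $A_\kappa\Psi(c)=0$. All entries of $\Psi(c)$ are strictly positive, and the kernel of a Laplacian‑type matrix is spanned by nonnegative vectors supported on the terminal strongly connected components of the digraph; a strictly positive kernel vector therefore forces every vertex to lie in such a component, which means every edge lies on a directed cycle. Thus the network is weakly reversible, and contrapositively a network that is not weakly reversible has no positive equilibrium.

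Next, (ii). When the network is weakly reversible the Matrix--Tree theorem provides a strictly positive vector $\rho\in\ker A_\kappa$, obtained by assembling the spanning‑tree weights of each strongly connected component. Using the deficiency‑zero identity one checks that the linear equations $\langle\ln c^*,(a_j,b_j)-(a_i,b_i)\rangle=\ln(\rho_j/\rho_i)$ over the edges are consistent, so there is a $c^*\in\mathbb{R}^2_+$ with $\Psi(c^*)$ proportional to $\rho$ on each linkage class; such a $c^*$ satisfies $A_\kappa\Psi(c^*)=0$, hence is a (complex‑balanced) positive equilibrium. For uniqueness and stability I would use the Horn--Jackson function
\[
V(c)=x\ln\tfrac{x}{x^*}-x+x^*+y\ln\tfrac{y}{y^*}-y+y^*,
\]
which is strictly convex on $\mathbb{R}^2_+$ with its minimum at $c^*$. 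A computation using $A_\kappa\Psi(c^*)=0$ together with the weighted arithmetic--geometric mean inequality, applied one complex at a time, gives $\dot V\le0$, with equality precisely at the complex‑balanced equilibria. By the computation in the proof of (iii) every positive equilibrium is complex balanced, and complex‑balanced equilibria are unique within each stoichiometric compatibility class; since the reaction vectors span $\mathbb{R}^2$ that class is all of $\mathbb{R}^2_+$, so $c^*$ is the unique positive equilibrium and $V$ is a strict Lyapunov function on $\mathbb{R}^2_+$, whence $c^*$ is (globally) asymptotically stable.

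Finally, (i). If the network is weakly reversible, the strict Lyapunov function $V$ from the previous step strictly decreases along every non‑constant solution, so no periodic orbit can lie in $\mathbb{R}^2_+$. If it is not weakly reversible, then by (iii) there is no positive equilibrium; but a periodic orbit in the open quadrant would bound a Jordan domain contained in the convex set $\mathbb{R}^2_+$, and the index / Poincar\'e--Bendixson argument would force an equilibrium of \eqref{eq:mass_action_ode} inside that domain, a contradiction. Hence in either case \eqref{eq:mass_action_ode} has no periodic solution entirely in $\mathbb{R}^2_+$. I expect the main obstacles to be the graph‑theoretic input describing $\ker A_\kappa$ in terms of terminal strongly connected components (used in (iii), and via the Matrix--Tree theorem in the existence half of (ii)) and the inequality $\dot V\le 0$ for the Horn--Jackson function; the latter, where the deficiency‑zero identity and the convexity estimate are actually used, is the heart of the argument.
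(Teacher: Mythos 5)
The paper does not actually prove this theorem; it is quoted as a classical result with citations to Feinberg and Horn--Jackson, and your argument is precisely the standard proof from those sources: the identity $\delta=\dim(\ker Y\cap\lspan\{e_j-e_i\})$ forces every positive equilibrium to be complex balanced, the Laplacian-kernel/Matrix--Tree description of $\ker A_\kappa$ gives (iii) and the existence half of (ii), and the Horn--Jackson entropy-like function gives uniqueness, stability, and (i). This is correct. One caution: the parenthetical ``(globally)'' in your treatment of (ii) overstates what the Lyapunov function alone delivers --- $\dot V\le 0$ on $\mathbb{R}^2_+$ does not by itself exclude $\omega$-limit points on the boundary (that difficulty is the content of the Global Attractor Conjecture), and the paper itself obtains \emph{global} stability only by combining the theorem with permanence (\Cref{thm:wr_permanence}); since the theorem as stated claims only asymptotic stability, which your argument does establish, this is a cosmetic rather than a substantive flaw.
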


Notice that the combination of \Cref{thm:dfc0,thm:wr_permanence} yields that the unique positive equilibrium of a weakly reversible deficiency-zero planar mass-action system is in fact globally asymptotically stable.

For stating the second classical result, we need one more term. For a directed graph $(V,E)$, denote by $t$ the number of its absorbing strong components.

\begin{theorem}[Deficiency-One Theorem \cite{feinberg:1995}]
\label{thm:dfc1}
Assume that the deficiency of a planar mass-action system is one. Further, assume that $\ell=t=1$. Then the following statements hold.
\begin{enumerate}[(i)]
\item If the underlying network is weakly reversible then there exists a unique positive equilibrium.
\item If the underlying network is not weakly reversible then the number of positive equilibria is either $0$ or $1$.
\item The determinant of the Jacobian matrix at a positive equilibrium is nonzero.
\end{enumerate}
\end{theorem}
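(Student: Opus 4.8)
The plan is to prove (i)--(iii) together by passing to the Feinberg--Horn--Jackson form of the kinetics. Write $Y$ for the $2\times m$ matrix whose $i$-th column is the complex $(a_i,b_i)$, write $A_\kappa$ for the weighted graph Laplacian of $(V,E)$ (so that $(A_\kappa)_{ij}=\kappa_{ji}$ for $i\neq j$ and the columns of $A_\kappa$ sum to zero), and set $\Psi(c)=(x^{a_1}y^{b_1},\dots,x^{a_m}y^{b_m})^\top$; then \eqref{eq:mass_action_ode} reads $\dot c=YA_\kappa\Psi(c)$. I would first record what the hypotheses buy us. The reaction vectors spanning $\mathbb{R}^2$ gives $\mathrm{rank}\,Y=2$, and $\delta=1$ together with $\ell=1$ forces $m=4$. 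With $\ell=1$ one has $\mathrm{im}\,A_\kappa\subseteq\widetilde{\mathcal S}:=\{v\in\mathbb{R}^4:\sum_iv_i=0\}$, and with $t=1$ the matrix--tree description of $\ker A_\kappa$ gives $\dim\ker A_\kappa=1$, hence $\mathrm{rank}\,A_\kappa=3=\dim\widetilde{\mathcal S}$ and so $\mathrm{im}\,A_\kappa=\widetilde{\mathcal S}$; moreover $\ker A_\kappa=\lspan(\rho)$ with $\rho\geq 0$ supported exactly on the unique absorbing strong component, and all coordinates of $\rho$ are positive if and only if the network is weakly reversible. Finally $\delta=1$ means $\dim(\ker Y\cap\widetilde{\mathcal S})=1$; fix a spanning vector $\beta$, equivalently the essentially unique affine dependence $\sum_i\beta_i(a_i,b_i)=0$, $\sum_i\beta_i=0$ among the four complexes (and note $\ker Y\not\subseteq\widetilde{\mathcal S}$, i.e.\ the complexes are not collinear).

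Next, a reduction. A point $c^*\in\mathbb{R}^2_+$ is a positive equilibrium if and only if $A_\kappa\Psi(c^*)\in\ker Y$; since $A_\kappa\Psi(c^*)\in\mathrm{im}\,A_\kappa=\widetilde{\mathcal S}$ in any case, this is equivalent to $A_\kappa\Psi(c^*)\in\ker Y\cap\widetilde{\mathcal S}=\lspan(\beta)$, i.e.\ $A_\kappa\Psi(c^*)=\lambda\beta$ for some $\lambda\in\mathbb{R}$. If the network is not weakly reversible, $\rho$ has a zero coordinate, so $\lambda=0$ is impossible (it would force $\Psi(c^*)\in\lspan(\rho)$, contradicting the positivity of $\Psi(c^*)$); thus $\lambda\neq 0$ in that case. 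Fixing $\psi_\beta$ with $A_\kappa\psi_\beta=\beta$, the set $\{\psi:A_\kappa\psi\in\lspan(\beta)\}$ is the two-plane $P:=\lspan(\rho,\psi_\beta)$, whereas $\{\Psi(c):c\in\mathbb{R}^2_+\}$ is the two-dimensional monomial surface $M:=\{\psi\in\mathbb{R}^4_+:\langle\log\psi,v\rangle=0\text{ for all }v\in\ker Y\}$. Hence positive equilibria are in bijection with the points of $P\cap M$, the intersection of two $2$-planes in $\mathbb{R}^4$.

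For existence in part (i) one does not even need deficiency one: a weakly reversible planar mass-action system is permanent by \Cref{thm:wr_permanence}, so all trajectories eventually enter a fixed compact subset of $\mathbb{R}^2_+$, and hence the flow has a compact global attractor in $\mathbb{R}^2_+$; by the Poincar\'e--Bendixson theorem this attractor contains an equilibrium or a periodic orbit, and in the latter case the disk the orbit bounds (contained in $\mathbb{R}^2_+$ and flow-invariant) contains an equilibrium by a Brouwer fixed-point argument. For uniqueness --- part (ii), and the remaining half of (i) --- I would work inside the open convex planar region $P\cap\mathbb{R}^4_+$ and analyze there the two ``log-linear'' equations cutting out $M$, one of which may be taken to be $\langle\log\psi,\beta\rangle=0$. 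The sign pattern of $\beta$ is dictated by the configuration of the four complexes (alternating around the boundary if they are in convex position; of barycentric type $(+,+,+,-)$ if one lies inside the triangle spanned by the other three), and this, together with the nonnegativity of $\rho$ and the sign structure of $\psi_\beta$ inherited from the partial order on the strong components with a single absorbing one, should turn the resulting reduced scalar equation into a strictly monotone one, whence at most one solution.

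Finally, part (iii). The Jacobian of $c\mapsto YA_\kappa\Psi(c)$ at $c^*$ equals $YA_\kappa D\,Y^\top\mathrm{diag}(c^*)^{-1}$ with $D:=\mathrm{diag}(\Psi(c^*))$ positive, so it is nonsingular if and only if the $2\times 2$ matrix $YA_\kappa DY^\top$ is. Running the reduction of the previous paragraph with $Dg$ in place of $\Psi(c^*)$ shows that $c^*$ is degenerate precisely when the planes $D^{-1}P$ and $(\ker Y)^\perp$ meet nontrivially; using the equilibrium relation $A_\kappa\Psi(c^*)=\lambda\beta$ to see that $\mathbf 1\in D^{-1}P$, this collapses, when $\lambda\neq0$, to the single scalar condition $\langle D^{-1}\rho,\beta\rangle=\sum_i\rho_i\beta_i/\Psi(c^*)_i=0$ (the case $\lambda=0$ occurs only when the network is weakly reversible, where it is the complex-balanced case, handled classically \cite{horn:jackson:1972}). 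So (iii) reduces to showing $\sum_i\rho_i\beta_i/\Psi(c^*)_i\neq 0$ at every positive equilibrium. The main obstacle throughout is precisely this sign bookkeeping: extracting the strict monotonicity above and the inequality $\sum_i\rho_i\beta_i/\Psi(c^*)_i\neq 0$ from the combinatorics of ``$\ell=t=1$'' together with ``$\delta=1$'', uniformly over all rate constants and over the finitely many combinatorial types of the reaction digraph and of the four-point configuration; the non--weakly-reversible case, where $\lambda\neq 0$ is forced and $\rho$ is supported on a proper subset of the complexes (so that the sum is shorter and one must use that a degeneration of the $\beta$-relation on that subset already obstructs a positive equilibrium), is the delicate one. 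An alternative route for (iii) is to expand $\det(YA_\kappa DY^\top)$ by the Cauchy--Binet formula and verify that its nonzero summands all carry the same sign.
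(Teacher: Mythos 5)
The paper does not prove this theorem at all --- it is quoted verbatim from Feinberg \cite{feinberg:1995} --- so the only meaningful comparison is with that reference. Your linear-algebraic setup is correct: from $\ell=t=1$ and $\delta=1$ you rightly get $m=4$, $\mathrm{im}\,A_\kappa=\widetilde{\mathcal S}$, $\ker A_\kappa=\lspan(\rho)$ with $\rho\geq0$ supported on the terminal component, and the identification of positive equilibria with the points of $P\cap M$ for the $2$-plane $P=\lspan(\rho,\psi_\beta)$. The computation showing that, when $\lambda\neq0$, degeneracy of the Jacobian collapses to $\sum_i\rho_i\beta_i/\Psi(c^*)_i=0$ is also correct (it uses $\mathbf 1=D^{-1}\Psi(c^*)\in D^{-1}P$ together with $\langle\mathbf 1,\beta\rangle=0$ and $\langle\mathbf 1,\eta\rangle\neq0$ for $\eta\in\ker Y\setminus\widetilde{\mathcal S}$). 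Your existence argument for (i) via permanence, Poincar\'e--Bendixson, and a Brouwer/index argument inside a periodic orbit is a legitimate planar shortcut that Feinberg's purely algebraic proof does not use, and it is consistent with how the paper itself exploits permanence elsewhere.

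Nevertheless, the attempt has a genuine gap exactly where the theorem has its content. Uniqueness in (i)--(ii) is asserted to follow because a ``reduced scalar equation'' on $P\cap\mathbb{R}^4_+$ ``should'' turn out to be strictly monotone, and (iii) is reduced to the inequality $\sum_i\rho_i\beta_i/\Psi(c^*)_i\neq0$, which is then not established; you flag both yourself as ``the main obstacle.'' But these sign and monotonicity arguments --- playing the sign pattern of the affine dependence $\beta$ among the four complexes against the partial order on strong components forced by $\ell=t=1$, uniformly over all rate constants and all combinatorial types --- are precisely what constitutes the bulk of Feinberg's 1995 proof (his sign-compatibility machinery). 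As written, you have a correct reformulation of the statement plus a proof of existence, not a proof of the theorem. To complete it along your lines, the most tractable route for (iii) in the planar case is the Cauchy--Binet expansion of $\det(YA_\kappa DY^\top)$ that you mention only in passing: its nonzero terms are indexed by pairs of complexes and spanning-forest weights, and one must verify they cannot cancel. For uniqueness you would still have to parametrize $P\cap\mathbb{R}^4_+$ explicitly and prove the claimed monotonicity case by case over the finitely many possible sign vectors of $\beta$ and supports of $\rho$; the non--weakly-reversible case, where $\rho$ vanishes on some complexes, is the delicate one, as you correctly anticipate.
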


We now highlight the main differences between the conclusions of the above two theorems. For a planar mass-action system that falls under the assumptions of the Deficiency-One Theorem,
\begin{enumerate}[(A)]

\item in case the underlying network is not weakly reversible,

\begin{enumerate}[(a)]
\item a positive equilibrium can nevertheless exist,
\item whether there exists a positive equilibrium might depend on the specific values of the rate constants,
\item even if there exists a unique positive equilibrium, there could be unbounded solutions as well as solutions that approach the boundary of $\mathbb{R}^2_+$,
\end{enumerate}

\item regardless of weak reversibility,
\begin{enumerate}[(a)]
\item the unique positive equilibrium could be unstable (however, the Jacobian matrix there is guaranteed to be nonsingular),
\item there is no information about the existence of periodic solutions.
\end{enumerate}

\end{enumerate}

Points (a) and (b) in (A) above are studied in detail in \cite{boros:2012} and \cite{boros:2013b}, respectively. In this paper, we touch these questions only briefly: we show a network in \Cref{sec:zigzag} for which the existence of a positive equilibrium is dependent on the specific choice of the rate constants.

Investigation of points (a) and (b) in (B) above is the main motivation for the present paper. The only (published) example so far of a reaction network that satisfies the Deficiency-One Theorem but with an unstable positive equilibrium (and presumably a limit cycle) seems to be the three-species network
\begin{center}
\begin{tikzpicture}

\node (P1) at (0,0)  {$\mathsf{Y}$};
\node (P2) at (1,0)  {$\mathsf{Z}$};
\node (P3) at (2,0)  {$\mathsf{X}$};
\node (P4) at (3,0)  {$2\mathsf{X}$};
\node (P5) at (1,-1) {$\mathsf{X} + \mathsf{Y}$};

\draw[arrows={->[scale=.8]},transform canvas={yshift=2pt}] (P1) to node {} (P2);
\draw[arrows={->[scale=.8]},transform canvas={yshift=-2pt}] (P2) to node {} (P1);
\draw[arrows={->[scale=.8]},transform canvas={yshift=2pt}] (P2) to node {} (P3);
\draw[arrows={->[scale=.8]},transform canvas={yshift=-2pt}] (P3) to node {} (P2);
\draw[arrows={->[scale=.8]},transform canvas={yshift=2pt}] (P3) to node {} (P4);
\draw[arrows={->[scale=.8]},transform canvas={yshift=-2pt}] (P4) to node {} (P3);
\draw[arrows={->[scale=.8]},transform canvas={xshift=2pt}] (P2) to node {} (P5);
\draw[arrows={->[scale=.8]},transform canvas={xshift=-2pt}] (P5) to node {} (P2);

\end{tikzpicture}
\end{center}
which is due to Feinberg \cite[(4.12)]{feinberg:1995}. In this paper we show that such examples are abundant already for two species. Note that Feinberg's example is a bimolecular one. It is shown in 
\cite{pota:1983}
that the only bimolecular two species system with periodic solutions is the Lotka reaction \cite{lotka:1920}.

\Cref{sec:quadrangle,sec:chain} are devoted to studying the cycle of four irreversible reactions and the chain of three irreversible reactions, respectively. In \Cref{sec:three_reactions} we examine a generalization of the latter: three irreversible reactions that do not necessarily form a chain. The networks in \Cref{sec:quadrangle,sec:chain,sec:zigzag} all satisfy the assumptions of the Deficiency-One Theorem.

Finally, we remark that the Deficiency-One Theorem has a version for the case of more than one connected component (i.e., $\ell\geq2$). However, this does not cover the deficiency-one networks in \Cref{sec:three_reactions}. Nevertheless, the conclusions $(ii)$ and $(iii)$ in \Cref{thm:dfc1} hold.

\section{Quadrangle}
\label{sec:quadrangle}

In this section we study the mass-action system
\begin{align}
\label{eq:quadrangle_network}
\begin{split}
\begin{tikzpicture}[scale=3]

\node (P1) at (0,0)  {$a_1 \mathsf{X} + b_1 \mathsf{Y}$};
\node (P2) at (1,0)  {$a_2 \mathsf{X} + b_2 \mathsf{Y}$};
\node (P3) at (1,2/3)  {$a_3 \mathsf{X} + b_3 \mathsf{Y}$};
\node (P4) at (0,2/3)  {$a_4 \mathsf{X} + b_4 \mathsf{Y}$};

\draw[arrows={->[scale=1]},below] (P1) to node {$\kappa_1$} (P2);
\draw[arrows={->[scale=1]},right] (P2) to node {$\kappa_2$} (P3);
\draw[arrows={->[scale=1]},above] (P3) to node {$\kappa_3$} (P4);
\draw[arrows={->[scale=1]},left] (P4) to node {$\kappa_4$} (P1);

\end{tikzpicture}
\end{split}
\end{align}
and its associated differential equation
\begin{align}
\label{eq:quadrangle_ode}
\begin{split}
\dot{x} &= (a_2-a_1)\kappa_1 x^{a_1}y^{b_1} + (a_3-a_2)\kappa_2 x^{a_2}y^{b_2} +\\ & \quad  +(a_4-a_3)\kappa_3 x^{a_3}y^{b_3} + (a_1-a_4)\kappa_4 x^{a_4}y^{b_4}, \\
\dot{y} &= (b_2-b_1)\kappa_1 x^{a_1}y^{b_1} + (b_3-b_2)\kappa_2 x^{a_2}y^{b_2} +\\ & \quad  +(b_4-b_3)\kappa_3 x^{a_3}y^{b_3} + (b_1-b_4)\kappa_4 x^{a_4}y^{b_4}
\end{split}
\end{align}
under the non-degeneracy assumption that $(a_1,b_1)$, $(a_2,b_2)$, $(a_3,b_3)$, $(a_4,b_4)$ are distinct and do not lie on a line.

Note that the mass-action system \eqref{eq:quadrangle_network} is weakly reversible and its deficiency is $\delta = 4 - 1 - 2 = 1$. By the Deficiency-One Theorem \cite[Theorem 4.2]{feinberg:1995}, there exists a unique positive equilibrium for the differential equation \eqref{eq:quadrangle_ode}. Moreover, the determinant of the Jacobian matrix at the equilibrium does not vanish \cite[Theorem 4.3]{feinberg:1995}. Since additionally the system is permanent \cite[Theorem 4.6]{craciun:nazarov:pantea:2013}, the index of the equilibrium is $+1$ \cite[Theorem 19.3]{hofbauer:sigmund:1988}. 
Hence, the determinant is positive, and consequently, the unique positive equilibrium is asymptotically stable (respectively, unstable) if the trace is negative (respectively, positive). In case the trace is zero, the eigenvalues are purely imaginary and some further work is required to decide stability of the equilibrium.

In \Cref{subsec:quadrangle_parallelogram}, we present a system for which the unique positive equilibrium is unstable and a stable limit cycle exists. In \Cref{subsec:quadrangle_three_limit_cycles}, we prove that even three limit cycles are possible for the differential equation \eqref{eq:quadrangle_ode}. Finally, in \Cref{subsec:quadrangle_dulac}, we describe a subclass of the quadrangle networks \eqref{eq:quadrangle_network} that are globally stable for all rate constants.

\subsection{Unstable equilibrium and a stable limit cycle}
\label{subsec:quadrangle_parallelogram}

Let us consider the mass-action system \eqref{eq:quadrangle_network} with
\begin{align*}
(a_1,b_1) = (0,1), (a_2,b_2) = (1,0), (a_3,b_3) = (1,2), (a_4,b_4) = (0,3).
\end{align*}
Thus, the network and its associated differential equation take the form
\begin{center}
\begin{tikzpicture}

\draw [step=1, gray, very thin] (-0.25,-0.25) grid (3.25,3.25);
\draw [ ->, black] (-0.25,0)--(3.25,0);
\draw [ ->, black] (0,-0.25)--(0,3.25);

\node[inner sep=0,outer sep=1] (P1) at (0,1) {\large \textcolor{blue}{$\bullet$}};
\node[inner sep=0,outer sep=1] (P2) at (1,0) {\large \textcolor{blue}{$\bullet$}};
\node[inner sep=0,outer sep=1] (P3) at (1,2) {\large \textcolor{blue}{$\bullet$}};
\node[inner sep=0,outer sep=1] (P4) at (0,3) {\large \textcolor{blue}{$\bullet$}};

\node [below left]  at (P1) {$\mathsf{Y}$};
\node [below right] at (P2) {$\mathsf{X}$};
\node [above right] at (P3) {$\mathsf{X}+2\mathsf{Y}$};
\node [above left]  at (P4) {$3\mathsf{Y}$};

\draw[arrows={-stealth},very thick,blue,below left]  (P1) to node {$\kappa_1$} (P2);
\draw[arrows={-stealth},very thick,blue,right]       (P2) to node {$\kappa_2$} (P3);
\draw[arrows={-stealth},very thick,blue,above right] (P3) to node {$\kappa_3$} (P4);
\draw[arrows={-stealth},very thick,blue,left]        (P4) to node {$\kappa_4$} (P1);

\node [] at (4,3/2) {and};

\node [] at (7.5,3/2) {$\begin{aligned}
\dot{x} &= \kappa_1 y - \kappa_3 xy^2, \\
\dot{y} &= -\kappa_1y + 2 \kappa_2 x + \kappa_3 xy^2 - 2\kappa_4 y^3.
\end{aligned}$};

\end{tikzpicture}
\end{center}
A short calculation shows that the unique positive equilibrium is given by 
\begin{align*}
(\overline{x},\overline{y})=\left(\left(\frac{\kappa_1^3 \kappa_4}{\kappa_3^3 \kappa_2}\right)^{\frac{1}{4}},\left(\frac{\kappa_1 \kappa_2}{\kappa_3 \kappa_4}\right)^{\frac{1}{4}}\right)
\end{align*}
and the trace of the Jacobian matrix at the equilibrium is positive if and only if
\begin{align*}
\frac{\kappa_1}{\kappa_2} > \left(6\sqrt{\frac{\kappa_3}{\kappa_4}}+\sqrt{\frac{\kappa_4}{\kappa_3}}\right)^2.
\end{align*}
By picking rate constants that make the trace positive, one gets a system, where the positive equilibrium is repelling, and, by combining permanence and the Poincar\'e--Bendixson Theorem, there must exist a stable limit cycle.

\subsection{Three limit cycles}
\label{subsec:quadrangle_three_limit_cycles}

Let us consider the mass-action system \eqref{eq:quadrangle_network} with
\begin{align*}
(a_1,b_1) = (0,1), (a_2,b_2) = (0,0), (a_3,b_3) = (1,2), (a_4,b_4) = (1,5).
\end{align*}
Thus, the network and its associated differential equation take the form
\begin{center}
\begin{tikzpicture}

\draw [step=1, gray, very thin] (-0.25,-0.25) grid (2.25,5.25);
\draw [ ->, black] (-0.25,0)--(2.25,0);
\draw [ ->, black] (0,-0.25)--(0,5.25);

\node[inner sep=0,outer sep=1] (P1) at (0,1) {\large \textcolor{blue}{$\bullet$}};
\node[inner sep=0,outer sep=1] (P2) at (0,0) {\large \textcolor{blue}{$\bullet$}};
\node[inner sep=0,outer sep=1] (P3) at (1,2) {\large \textcolor{blue}{$\bullet$}};
\node[inner sep=0,outer sep=1] (P4) at (1,5) {\large \textcolor{blue}{$\bullet$}};

\node [above left]  at (P1) {$\mathsf{Y}$};
\node [below left] at (P2) {$\mathsf{0}$};
\node [below right] at (P3) {$\mathsf{X}+2\mathsf{Y}$};
\node [above right]  at (P4) {$\mathsf{X}+5\mathsf{Y}$};

\draw[arrows={-stealth},very thick,blue,left]  (P1) to node {$\kappa_1$} (P2);
\draw[arrows={-stealth},very thick,blue,right]       (P2) to node {$\kappa_2$} (P3);
\draw[arrows={-stealth},very thick,blue,right] (P3) to node {$\kappa_3$} (P4);
\draw[arrows={-stealth},very thick,blue,left]        (P4) to node {$\kappa_4$} (P1);

\node [] at (3,5/2) {and};

\node [] at (6.5,5/2) {$\begin{aligned}
\dot{x} &=               \kappa_2                  - \kappa_4 xy^5, \\
\dot{y} &= -\kappa_1y + 2\kappa_2 + 3\kappa_3 xy^2 - 4\kappa_4 xy^5.
\end{aligned}$};

\end{tikzpicture}
\end{center}
Our goal is to show that there exist rate constants $\kappa_1$, $\kappa_2$, $\kappa_3$, $\kappa_4$ such that the above differential equation has three limit cycles.

Linear scaling of the differential equation by the equilibrium $(\overline{x},\overline{y})$, followed by a multiplication by $\overline{x}$ yields
\begin{align}
\label{eq:quadrangle_three_limit_cycles_ode}
\begin{split}
\dot{x} &= \overline{\kappa}_2 - \overline{\kappa}_4 x y^5, \\
\dot{y} &= K[-\overline{\kappa}_1 y + 2\overline{\kappa}_2 + 3\overline{\kappa}_3 x y^2-4\overline{\kappa}_4 x y^5],
\end{split}
\end{align}
where
\begin{align*}
\overline{\kappa}_1 = \kappa_1 \overline{y}, \overline{\kappa}_2 = \kappa_2, \overline{\kappa}_3 = \kappa_3 \overline{x} \overline{y}^2, \overline{\kappa}_4 = \kappa_4 \overline{x} \overline{y}^5,
\text{ and } K=\frac{\overline{x}}{\overline{y}}.
\end{align*}
As a result of the scaling, the positive equilibrium is moved to $(1,1)$, and the focal value computations become somewhat more convenient. Note that
\begin{align*}
0 &= \overline{\kappa}_2 - \overline{\kappa}_4, \\
0 &= -\overline{\kappa}_1 + 2\overline{\kappa}_2 + 3\overline{\kappa}_3 - 4\overline{\kappa}_4.
\end{align*}
From this, we obtain that
\begin{align*}
\overline{\kappa}_1 &= \overline{\kappa}_4 \gamma,\\
\overline{\kappa}_2 &= \overline{\kappa}_4,\\
\overline{\kappa}_3 &= \overline{\kappa}_4 \frac{\gamma+2}{3}
\end{align*}
for some $\gamma>0$. After dividing by $\overline{\kappa}_4$, the differential equation \eqref{eq:quadrangle_three_limit_cycles_ode} thus becomes
\begin{align}
\label{eq:quadrangle_three_limit_cycles_ode_K}
\begin{split}
\dot{x} &= 1 - x y^5, \\
\dot{y} &= K[-\gamma y + 2 + (\gamma+2) x y^2-4 x y^5],
\end{split}
\end{align}
where $K>0$ and $\gamma>0$. One finds that the trace of the Jacobian matrix at the equilibrium $(1,1)$ vanishes for $\gamma=16+\frac{1}{K}$. Under this, the first focal value is
\begin{align*}
L_1 = \frac{\pi(3416 K^3 + 1250 K^2 - 29 K - 5)}{20\sqrt{2(2+35K)^3}},
\end{align*}
which is zero for $K=K_0\approx 0.06862$, negative for $0<K<K_0$, and positive for $K>K_0$. Assuming $K=K_0$, one finds that the second focal value, $L_2$, is approximately $0.01293$, a positive number.

Take now $K=K_0$ and $\gamma=16+\frac{1}{K_0}$. Since the first nonzero focal value is positive, the equilibrium $(1,1)$ is repelling. First, perturb $K$ to a slightly smaller value, and simultaneously perturb $\gamma$ in order to maintain the relation $\gamma=16+\frac{1}{K}$. Then $L_1<0$, and thus the equilibrium $(1,1)$ becomes asymptotically stable, and an unstable limit cycle $\Gamma_1$ is created. Next perturb $\gamma$ to a slightly larger value. Then the trace becomes positive, and thus the equilibrium $(1,1)$ becomes unstable again, and a stable limit cycle $\Gamma_0$ is created. Finally, by the permanence of the system, the Poincar\'e-Bendixson Theorem guarantees that a stable limit cycle surrounds $\Gamma_1$. Therefore, we have shown that there exist $K>0$ and $\gamma>0$ such that the differential equation \eqref{eq:quadrangle_three_limit_cycles_ode_K} has at least three limit cycles.

We conclude this subsection by a remark. By keeping $b_4>2$ a parameter (instead of fixing its value to $5$), one could find parameter values for which $L_1=0$, $L_2=0$, $L_3<0$ holds (with $b_4 \approx 4.757$ and $K \approx 0.0909$). Then one can bifurcate three small limit cycles from the equilibrium.

\subsection{Global stability of the equilibrium}
\label{subsec:quadrangle_dulac}

As we have seen in \Cref{subsec:quadrangle_parallelogram,subsec:quadrangle_three_limit_cycles}, the unique positive equilibrium of the differential equation \eqref{eq:quadrangle_ode} could be unstable for some rate constants. However, under a certain condition on the relative position of the four points $(a_1,b_1)$, $(a_2,b_2)$, $(a_3,b_3)$, $(a_4,b_4)$, one can conclude global asymptotic stability of the unique positive equilibrium for all rate constants.

The differential equation \eqref{eq:quadrangle_ode} is permanent and has a unique positive equilibrium. Furthermore, the determinant of the Jacobian matrix is positive there. Hence, by the Poincar\'e-Bendixson Theorem, global asymptotic stability of the equilibrium is equivalent to the non-existence of a periodic solution. One can preclude the existence of a periodic solution by the Bendixson-Dulac test: if there exists a function $h\colon \mathbb{R}^2_+ \to \mathbb{R}_+$ such that $\divergence(hf,hg)<0$ then the differential equation
\begin{align*}
\dot{x} &= f(x,y),\\
\dot{y} &= g(x,y)
\end{align*}
cannot have a periodic solution that lies entirely in $\mathbb{R}^2_+$.

With $f(x,y)$ and $g(x,y)$ denoting the r.h.s. of the equations for $\dot{x}$ and $\dot{y}$ in \eqref{eq:quadrangle_ode}, respectively, and taking $h(x,y) = x^{-\alpha} y^{-\beta}$, one finds
\begin{align*}
\frac{\divergence(hf,hg)}{h}(x,y)&=\sum_{i=1}^4(\alpha-a_i)(a_{i}-a_{i+1})\kappa_i x^{a_i-1}y^{b_i}+\\
&\quad +\sum_{i=1}^4(\beta-b_i)(b_{i}-b_{i+1})\kappa_i x^{a_i}y^{b_i-1},
\end{align*}
where $a_5=a_1$ and $b_5=b_1$ by convention. Ignoring the degenerate case $a_1=a_2=a_3=a_4$, one finds that $(\alpha-a_i)(a_i-a_{i+1})\leq0$ for each $i=1,2,3,4$ and $(\alpha-a_i)(a_i-a_{i+1})<0$ for some $i=1,2,3,4$ if
\begin{align*}
a_1\leq a_2 \leq a_3 \leq a_4 &\text{ and } a_3 \leq \alpha \leq a_4,\text{ or }\\
a_1\leq a_2 \leq a_4 \leq a_3 &\text{ and } a_2 \leq \alpha \leq a_4,\text{ or }\\
a_1\leq a_3 \leq a_2 \leq a_4 &\text{ and } a_3 \leq \alpha \leq a_2,\text{ or }\\
a_1\leq a_3 \leq a_4 \leq a_2 &\text{ and } a_3 \leq \alpha \leq a_4,\text{ or }\\
a_1\leq a_4 \leq a_3 \leq a_2 &\text{ and } a_1 \leq \alpha \leq a_4,\text{ or }\\
a_1 =   a_4 \leq a_2 \leq a_3 &\text{ and } a_2 \leq \alpha \leq a_3,\text{ or }\\
a_1\leq a_4 \leq a_2 =    a_3 &\text{ and } a_1 \leq \alpha \leq a_4,\text{ or }\\
a_1\leq a_4 =    a_2 \leq a_3 &\text{ and } \alpha = a_2.
\end{align*}
On the other hand, if $a_1 < a_4 < a_2 < a_3$ then no matter how one fixes $\alpha$, at least one of $(\alpha-a_2)(a_2-a_3)$ and $(\alpha-a_4)(a_4-a_1)$ is positive. Notice that we covered all configurations with $a_1=\min(a_1,a_2,a_3,a_4)$. All the other cases are treated similarly. Also, it works analogously with the $b_j$'s and $\beta$.

\begin{proposition}
\label{prop:quadrangle_dulac}
Consider the differential equation \eqref{eq:quadrangle_ode} and let the indices $i$ and $j$ satisfy $a_i=\min(a_1,a_2,a_3,a_4)$ and $b_j=\min(b_1,b_2,b_3,b_4)$, respectively. Assume that both $a_i<a_{i+3}<a_{i+1}<a_{i+2}$ and $b_j<b_{j+3}<b_{j+1}<b_{j+2}$ are violated (where $a_5=a_1$, $a_6=a_2$, $a_7=a_3$ and $b_5=b_1$, $b_6=b_2$, $b_7=b_3$ by convention). Then there is no periodic solution and the unique positive equilibrium is globally asymptotically stable.
\end{proposition}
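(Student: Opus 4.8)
The plan is to run the Bendixson--Dulac test with the one-parameter family of Dulac functions $h(x,y)=x^{-\alpha}y^{-\beta}$ introduced above. Since $h>0$ on $\mathbb{R}^2_+$ and each monomial $x^{a_i-1}y^{b_i}$ and $x^{a_i}y^{b_i-1}$ is positive there, the computed expression for $\divergence(hf,hg)/h$ is negative on all of $\mathbb{R}^2_+$ as soon as every coefficient $(\alpha-a_i)(a_i-a_{i+1})$ and $(\beta-b_i)(b_i-b_{i+1})$, $i=1,2,3,4$, is nonpositive and at least one is strictly negative. The strictness is automatic: summing around the quadrangle the $\alpha$-dependent terms telescope away and one is left with $\sum_{i=1}^4(\alpha-a_i)(a_i-a_{i+1})=-\tfrac12\sum_{i=1}^4(a_i-a_{i+1})^2$, which is strictly negative because the four points are not collinear and hence $a_1,a_2,a_3,a_4$ are not all equal. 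So it suffices to pick $\alpha$ with $(\alpha-a_i)(a_i-a_{i+1})\le0$ for every $i$ and, independently, $\beta$ with $(\beta-b_i)(b_i-b_{i+1})\le0$ for every $i$.

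Existence of such an $\alpha$ is a finite combinatorial question, and this is the real content of the proposition. The constraint decouples edge by edge: an ascending edge $a_i<a_{i+1}$ requires $\alpha\ge a_i$, a descending edge $a_i>a_{i+1}$ requires $\alpha\le a_i$, and a flat edge imposes nothing, so a feasible $\alpha$ exists exactly when the largest tail of an ascending edge does not exceed the smallest tail of a descending edge. Relabelling the quadrangle cyclically --- which changes neither \eqref{eq:quadrangle_ode} nor the Dulac construction --- I may assume $a_1=\min(a_1,a_2,a_3,a_4)$. If the $a_i$ are distinct there are six cyclic orderings; in the one ordering $a_1<a_4<a_2<a_3$ the requirements $\alpha\ge a_2$ and $\alpha\le a_4$ clash, so no $\alpha$ works, while in the other five (and in every ordering with ties) one reads off a nonempty admissible interval for $\alpha$ --- precisely the intervals tabulated just before the proposition. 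Thus a suitable $\alpha$ exists if and only if $a_i<a_{i+3}<a_{i+1}<a_{i+2}$ fails for the index $i$ realizing $\min(a_1,a_2,a_3,a_4)$, and the identical dichotomy governs $\beta$ and the $b_i$.

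Under the hypothesis of the proposition both forbidden patterns are excluded, so we may fix $\alpha$ and $\beta$ as above; then $\divergence(hf,hg)<0$ throughout $\mathbb{R}^2_+$, and the Bendixson--Dulac test rules out any periodic solution contained in $\mathbb{R}^2_+$. (Evaluating the inequality at the equilibrium, where $f=g=0$, moreover gives $h\,\tr J<0$, so $\tr J<0$ there.) It remains to combine this with the facts recorded at the beginning of the section: the system is permanent by \Cref{thm:wr_permanence}, it has a unique positive equilibrium, and the Jacobian determinant there is positive. Permanence and the Poincar\'e--Bendixson theorem then force every trajectory starting in $\mathbb{R}^2_+$ to converge to a periodic orbit or to the equilibrium; since periodic orbits have been excluded, the unique positive equilibrium is globally asymptotically stable.

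The one delicate point is the bookkeeping in the second paragraph: one has to verify that the single cyclic pattern $a_i<a_{i+3}<a_{i+1}<a_{i+2}$ really is the only obstruction, i.e., that every configuration with a tie among the $a_i$ (or among the $b_i$) still admits an admissible exponent --- which is exactly why the statement is phrased as the failure of a chain of strict inequalities rather than as a list of permitted orderings.
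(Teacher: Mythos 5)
Your proof is correct and follows essentially the same route as the paper: the Dulac function $x^{-\alpha}y^{-\beta}$, the edge-by-edge sign analysis identifying $a_i<a_{i+3}<a_{i+1}<a_{i+2}$ (with $a_i$ minimal) as the unique infeasible cyclic pattern, and the conclusion via the Bendixson--Dulac test combined with permanence and Poincar\'e--Bendixson. The telescoping identity $\sum_{i=1}^4(\alpha-a_i)(a_i-a_{i+1})=-\tfrac12\sum_{i=1}^4(a_i-a_{i+1})^2$ is a nice compact way to obtain the strict negativity that the paper gets by inspection of each listed case.
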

\begin{proof}
By the above discussion, one can find $\alpha$ and $\beta$ such that after multiplying by $h(x,y)=x^{-\alpha}y^{-\beta}$, the r.h.s. of the differential equation \eqref{eq:quadrangle_ode} has negative divergence everywhere. Then, by the Bendixson-Dulac test, there is no periodic solution and therefore the unique positive equilibrium is globally asymptotically stable. \qed
\end{proof}

In other words, if there exists a periodic solution then at least one of $a_i<a_{i+3}<a_{i+1}<a_{i+2}$ and $b_j<b_{j+3}<b_{j+1}<b_{j+2}$ in \Cref{prop:quadrangle_dulac} holds. The index-free way to express $a_i<a_{i+3}<a_{i+1}<a_{i+2}$ and $b_j<b_{j+3}<b_{j+1}<b_{j+2}$ is to say that the projection of the quadrangle to a horizontal line and a vertical line, respectively, take the form
\begin{center}
\begin{tikzpicture}

\node[inner sep=0,outer sep=1] (P1) at (0,0) {\large \textcolor{blue}{$\bullet$}};
\node[inner sep=0,outer sep=1] (P2) at (2,0) {\large \textcolor{blue}{$\bullet$}};
\node[inner sep=0,outer sep=1] (P3) at (3,0) {\large \textcolor{blue}{$\bullet$}};
\node[inner sep=0,outer sep=1] (P4) at (1,0) {\large \textcolor{blue}{$\bullet$}};

\draw[arrows={-stealth},very thick,blue,bend right=25] (P1) to node {} (P2);
\draw[arrows={-stealth},very thick,blue] (P2) to node {} (P3);
\draw[arrows={-stealth},very thick,blue,bend right=25] (P3) to node {} (P4);
\draw[arrows={-stealth},very thick,blue] (P4) to node {} (P1);

\node [] at (4.3,0) {and};

\node[inner sep=0,outer sep=1] (P1) at (6,-1.5) {\large \textcolor{blue}{$\bullet$}};
\node[inner sep=0,outer sep=1] (P2) at (6, 0.5) {\large \textcolor{blue}{$\bullet$}};
\node[inner sep=0,outer sep=1] (P3) at (6, 1.5) {\large \textcolor{blue}{$\bullet$}};
\node[inner sep=0,outer sep=1] (P4) at (6,-0.5) {\large \textcolor{blue}{$\bullet$}};

\draw[arrows={-stealth},very thick,blue,bend right=25] (P1) to node {} (P2);
\draw[arrows={-stealth},very thick,blue] (P2) to node {} (P3);
\draw[arrows={-stealth},very thick,blue,bend right=25] (P3) to node {} (P4);
\draw[arrows={-stealth},very thick,blue] (P4) to node {} (P1);

\node [] at (7,0) {\phantom{A},};

\end{tikzpicture}
\end{center}
respectively, where some arrows are bent in order to avoid overlapping.

Finally, since the mass-action systems in \Cref{subsec:quadrangle_parallelogram,subsec:quadrangle_three_limit_cycles} have a periodic solution for some rate constants, at least one of $a_i<a_{i+3}<a_{i+1}<a_{i+2}$ and $b_j<b_{j+3}<b_{j+1}<b_{j+2}$ must hold. Indeed, in each subsection $b_j<b_{j+3}<b_{j+1}<b_{j+2}$ holds with $j=2$.

\section{Chain of three reactions}
\label{sec:chain}

In this section we study the mass-action system
\begin{align}
\label{eq:chain_network}
\begin{split}
a_1 \mathsf{X} + b_1 \mathsf{Y} \stackrel{\kappa_1}{\longrightarrow} a_2 \mathsf{X} + b_2 \mathsf{Y} \stackrel{\kappa_2}{\longrightarrow} a_3 \mathsf{X} + b_3 \mathsf{Y}\stackrel{\kappa_3}{\longrightarrow} a_4 \mathsf{X} + b_4 \mathsf{Y}
\end{split}
\end{align}
and its associated differential equation
\begin{align}
\label{eq:chain_ode}
\begin{split}
\dot{x} &= (a_2-a_1)\kappa_1 x^{a_1}y^{b_1} + (a_3-a_2)\kappa_2 x^{a_2}y^{b_2} + (a_4-a_3)\kappa_3 x^{a_3}y^{b_3}, \\
\dot{y} &= (b_2-b_1)\kappa_1 x^{a_1}y^{b_1} + (b_3-b_2)\kappa_2 x^{a_2}y^{b_2} + (b_4-b_3)\kappa_3 x^{a_3}y^{b_3}
\end{split}
\end{align}
under the non-degeneracy assumption that
\begin{align}
\label{eq:chain_nondeg}
(a_1,b_1), (a_2,b_2), (a_3,b_3) \text{ do not lie on a line.}
\end{align}

By the Deficiency-One Theorem, the number of positive equilibria for the differential equation \eqref{eq:chain_ode} is either $0$ or $1$. Our first goal is to understand when is it $0$ and when is it $1$. Crucial for this is the relative position of the four points $P_i=(a_i,b_i)$ for $i = 1, 2, 3, 4$ in the plane. Define the numbers $h_1$, $h_2$, $h_3$, $h_4$ by
\begin{align*}
\begin{split}
h_1 &= \Delta(243), \\
h_2 &= \Delta(134), \\
h_3 &= \Delta(142), \\
h_4 &= \Delta(123),
\end{split}
\end{align*}
where $\Delta(ijk)=\det(P_j-P_i, P_k-P_i)$ is twice the signed area of the triangle $P_iP_jP_k$. The quantity $\Delta(ijk)$ is thus positive (respectively, negative) if the sequence $P_i$, $P_j$, $P_k$, $P_i$ of points are positively (respectively, negatively) oriented. The quantity $\Delta(ijk)$ is zero if the three points $P_i$, $P_j$, $P_k$ lie on a line. Note also that
\begin{align*}
\Delta(ijk) = \Delta(jki) =  \Delta(kij) = -\Delta(jik) = -\Delta(ikj) = -\Delta(kji)
\end{align*}
and $h_1 + h_2 + h_3 + h_4 = 0$.

Denote by $f(x,y)$ and $g(x,y)$ the r.h.s. of the equations for $\dot{x}$ and $\dot{y}$ in \eqref{eq:chain_ode}, respectively. By taking
\begin{align*}
(b_3-b_2)f(x,y)-(a_3-a_2)g(x,y)=0,\\
(b_4-b_3)f(x,y)-(a_4-a_3)g(x,y)=0,
\end{align*}
one obtains after a short calculation that the equilibrium equations take the form
\begin{align}
\label{eq:chain_binomial}
\begin{split}
(h_1+h_2+h_3) \kappa_1 x^{a_1}y^{b_1} &= h_1 \kappa_3 x^{a_3}y^{b_3},\\
(h_1+h_2) \kappa_1 x^{a_1}y^{b_1} &= h_1 \kappa_2 x^{a_2}y^{b_2}.
\end{split}
\end{align}
Thus, if there exists a positive equilibrium, $h_1$, $h_1+h_2$, $h_1+h_2+h_3$ must all have the same sign. If all of them are zero then $P_1$, $P_2$, $P_3$, $P_4$ lie on a line, contradicting the non-degeneracy assumption \eqref{eq:chain_nondeg}. If the common sign is nonzero then in particular $h_4=-(h_1+h_2+h_3)\neq0$, so $P_1$, $P_2$, $P_3$ do not lie on a line, and thus the obtained binomial equation \eqref{eq:chain_binomial} has exactly one positive solution for each choice of the rate constants. Let us stress that the existence of a positive equilibrium does not depend on the specific choice of the rate constants.

Next, we discuss the geometric meaning of
\begin{align*}
\sgn(h_1) = \sgn(h_1+h_2) = \sgn(h_1+h_2+h_3) \neq 0.
\end{align*}
Assume that
\begin{align*}
h_1<0, h_1+h_2<0, h_1+h_2+h_3<0.
\end{align*}
Since $\Delta(234)=-\Delta(243)=-h_1>0$, the sequence $P_2$, $P_3$, $P_4$, $P_2$ is oriented counterclockwise. Similarly, since $\Delta(231)=\Delta(123)=h_4=-(h_1+h_2+h_3)>0$, the sequence $P_2$, $P_3$, $P_1$, $P_2$ is oriented counterclockwise, too. Thus, $P_1$ and $P_4$ lie on the same side of the line through $P_2$ and $P_3$ (the green open half-plane in the left panel in \Cref{fig:chain_exist_equilibrium} shows where $P_4$ can be located for $h_1<0$ to hold). Since additionally $h_1+h_2<0$ holds, $P_1$ and $P_4$ lie on the same side of the line that is through $P_3$ and is parallel to the line through $P_1$ and $P_2$ (the red open half-plane in the left panel in \Cref{fig:chain_exist_equilibrium} shows where $P_4$ can be located for $h_1+h_2<0$ to hold). This latter follows from the fact that $h_1+h_2=\det(P_2-P_1,P_4-P_3)$. In other words, the sum of the angles $\measuredangle P_1P_2P_3$ and $\measuredangle P_2P_3P_4$ is smaller than $180^\circ$ (see the two red arcs in right panel in \Cref{fig:chain_exist_equilibrium}). The case $h_1>0$, $h_1+h_2>0$, $h_1+h_2+h_3>0$ is treated similarly, and we obtain the following result.

\begin{proposition}
\label{prop:chain_exist_equilibrium}
Consider the differential equation \eqref{eq:chain_ode}. Then the following four statements are equivalent.
\begin{enumerate}[(a)]
\item There exists a positive equilibrium.
\item There exists a unique positive equilibrium.
\item $\sgn(h_1) = \sgn(h_1+h_2) = \sgn(h_1+h_2+h_3) \neq 0$
\item The points $P_1$ and $P_4$ lie on the same side of the line through $P_2$ and $P_3$, and additionally $\measuredangle P_1P_2P_3 + \measuredangle P_2P_3P_4 < 180^\circ$.
\end{enumerate}
In particular, the existence of a positive equilibrium is independent of the values of $\kappa_1$, $\kappa_2$, $\kappa_3$.
\end{proposition}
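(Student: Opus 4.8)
The plan is to close the cycle (b)~$\Rightarrow$~(a)~$\Rightarrow$~(c)~$\Rightarrow$~(b) and separately to prove (c)~$\Leftrightarrow$~(d); almost everything needed is already assembled in the paragraphs preceding the statement, so the proof is mostly a matter of organizing it. The implication (b)~$\Rightarrow$~(a) is trivial. For (a)~$\Rightarrow$~(c): if $(x,y)$ is a positive equilibrium, then in the binomial equations~\eqref{eq:chain_binomial} all monomials and rate constants are positive, so $h_1$, $h_1+h_2$, $h_1+h_2+h_3$ share a common sign; moreover reading~\eqref{eq:chain_binomial} once more shows that $h_1=0$ forces $h_1+h_2=0$ and $h_1+h_2+h_3=0$, hence $h_4=-(h_1+h_2+h_3)=0$ and $P_1,P_2,P_3$ are collinear, contradicting~\eqref{eq:chain_nondeg}; therefore the common sign is nonzero, which is~(c).

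For (c)~$\Rightarrow$~(b), assume~(c). Then $h_4=-(h_1+h_2+h_3)\neq 0$, so $P_1,P_2,P_3$ are affinely independent. Since in each equation of~\eqref{eq:chain_binomial} the two sides carry the same nonzero sign, we may take logarithms; this turns the equilibrium conditions into a linear system in $(\log x,\log y)$ whose coefficient matrix has rows $P_3-P_1$ and $P_2-P_1$ and determinant $\Delta(132)=-h_4\neq 0$. Hence $(\log x,\log y)$, and therefore the positive equilibrium, is unique, which is~(b). The rate constants enter this linear system only through additive constants on the right-hand side, so the same argument establishes the final ``in particular'' assertion.

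It remains to prove (c)~$\Leftrightarrow$~(d), which is the only place where geometry enters and is the main obstacle. I would argue the representative sign pattern $h_1<0$, $h_1+h_2<0$, $h_1+h_2+h_3<0$; the all-positive pattern is the mirror image under reversal of orientation, and since~(d) is stated with unsigned angles and with ``same side'', both patterns describe the same configuration. Using $\Delta(234)=-h_1$ and $\Delta(231)=\Delta(123)=h_4=-(h_1+h_2+h_3)$, the condition $\sgn(h_1)=\sgn(h_1+h_2+h_3)\neq 0$ says exactly that $P_1$ and $P_4$ lie strictly on the same side of the line through $P_2$ and $P_3$; and, via the identity $h_1+h_2=\det(P_2-P_1,P_4-P_3)$, the condition $\sgn(h_1+h_2)=\sgn(h_1)$ amounts to $P_4$ lying on the same side as $P_1$ of the line through $P_3$ parallel to $P_1P_2$, which — as explained in the discussion preceding the statement — is $\measuredangle P_1P_2P_3+\measuredangle P_2P_3P_4<180^\circ$. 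Running these equivalences in both directions, and invoking~\eqref{eq:chain_nondeg} to rule out $P_1$ lying on the line $P_2P_3$, yields (c)~$\Leftrightarrow$~(d). The delicate point is precisely this last step: turning the sign-dictionary into a clean geometric statement, checking that the unsigned-angle formulation of~(d) genuinely encompasses both admissible sign patterns of~(c), and making sure the degenerate sub-cases (a vertex on one of the two auxiliary lines) are either excluded by the non-degeneracy hypothesis or handled consistently. Everything else is bookkeeping on top of computations already carried out before the statement.
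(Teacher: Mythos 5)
Your proposal is correct and follows essentially the same route as the paper, whose proof of this proposition is the discussion preceding its statement: read the common sign of $h_1$, $h_1+h_2$, $h_1+h_2+h_3$ off the binomial equilibrium equations \eqref{eq:chain_binomial}, rule out the all-zero case via \eqref{eq:chain_nondeg}, get uniqueness from the nonsingular (log-)linear system with determinant $\pm h_4\neq0$, and translate the sign conditions into the half-plane and angle statements using $\Delta(234)=-h_1$, $\Delta(123)=-(h_1+h_2+h_3)$, and $h_1+h_2=\det(P_2-P_1,P_4-P_3)$. The points you flag as delicate (the two sign patterns, the boundary sub-cases) are handled in the paper exactly as you suggest, by symmetry and by the strictness of the inequalities in (d).
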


We remark that the equivalence of $(a)$, $(b)$, and $(c)$ in \Cref{prop:chain_exist_equilibrium} also follows from \cite[Corollaries 4.6 and 4.7]{boros:2013b}, where the existence of a positive equilibrium is discussed for general deficiency-one mass-action systems.

\begin{figure}[ht]
\begin{center}
\begin{tabular}{cc}
\begin{tikzpicture}[scale=0.8]
\begin{axis}[axis lines=none, axis equal, no marks, xmin=-5, xmax=5, ymin=-4, ymax=6, samples=3]
\addplot+[black, ultra thin, name path=A] {x/2};
\addplot+[name path=B] {x/2+10};
\addplot[pattern=vertical lines, pattern color=black!20!green] fill between [of=A and B];

\addplot+[black, ultra thin, name path=C] {-x+3};
\addplot+[name path=D] {-x-10};
\addplot[pattern=horizontal lines, pattern color=red!30!] fill between [of=C and D];

\addplot+[black, ultra thin] {-x-3/2};

\node (P1) at (-4,5/2)  {\large \textcolor{blue}{$\bullet$}};
\node [right] at (P1) {$(a_1,b_1)$};
\node (P2) at (-1,-1/2)  {\large \textcolor{blue}{$\bullet$}};
\node [right] at (P2) {$(a_2,b_2)$};
\node (P3) at (2,1)  {\large \textcolor{blue}{$\bullet$}};
\node [right] at (P3) {$(a_3,b_3)$};

\end{axis}

\node[right] at (0.5,6.3) {half-plane with green vertical lines:};
\node[right] at (0.5,6) {location of $P_4$ for $h_1<0$ to hold};

\node[right] at (0.5,-0.2) {half-plane with red horizontal lines:};
\node[right] at (0.5,-0.5) {location of $P_4$ for $h_1+h_2<0$ to hold};

\end{tikzpicture} &
\begin{tikzpicture}[scale=.5]

\node[inner sep=0,outer sep=1] (P1) at (-4,5/2)  {\large \textcolor{blue}{$\bullet$}};
\node[inner sep=0,outer sep=1] (P2) at (-1,-1/2) {\large \textcolor{blue}{$\bullet$}};
\node[inner sep=0,outer sep=1] (P3) at (2,1) {\large \textcolor{blue}{$\bullet$}};
\node[inner sep=0,outer sep=1] (P4) at (-2,3)  {\large \textcolor{blue}{$\bullet$}};

\node [above] at (P1) {$(a_1,b_1)$};
\node [below] at (P2) {$(a_2,b_2)$};
\node [right] at (P3) {$(a_3,b_3)$};
\node [above] at (P4) {$(a_4,b_4)$};

\draw[red] (0,0) arc (26.5651:135:1.118);
\draw[red] (1,3/2) arc (153.4349:206.5651:1.118);

\draw[arrows={-stealth},very thick,blue] (P1) to node {} (P2);
\draw[arrows={-stealth},very thick,blue] (P2) to node {} (P3);
\draw[arrows={-stealth},very thick,blue] (P3) to node {} (P4);

\node at (0,-4.5) {};

\end{tikzpicture}
\end{tabular}
\end{center}
\caption{For a positive equilibrium to exist, the point $P_4$ is located in the sector that is the intersection of the green and red open half-spaces (left panel). Equivalently, the sum of the two angles indicated is less than $180^\circ$ (right panel).}
\label{fig:chain_exist_equilibrium}
\end{figure}
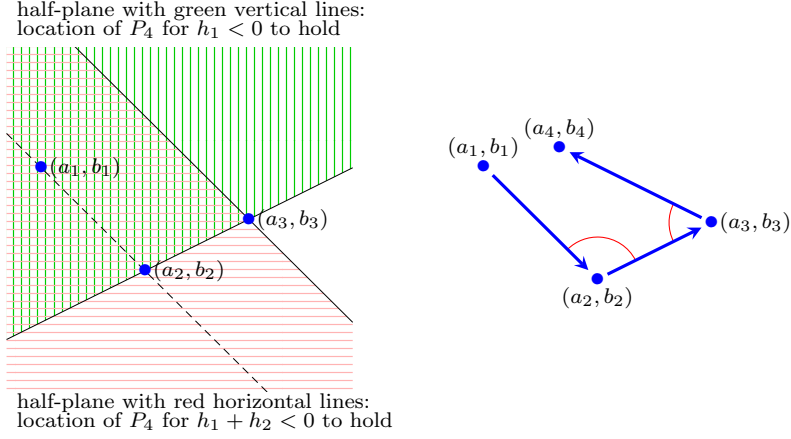

Now that we understand when the mass-action system \eqref{eq:chain_network} has a positive equilibrium, our next goal is to find parameter values for which the equilibrium is surrounded by three limit cycles (\Cref{subsec:chain_three_limit_cycles}) or by a continuum of closed orbits (\Cref{subsec:chain_reversible_center}). We prepare for these by moving the equilibrium to $(1,1)$.

Linear scaling of the differential equation \eqref{eq:chain_ode} by the equilibrium $(\overline{x},\overline{y})$, followed by a multiplication by $\overline{x}$ yields
\begin{align}
\label{eq:chain_ode_scaled}
\begin{split}
\dot{x} &= (a_2-a_1)\overline{\kappa}_1 x^{a_1}y^{b_1} + (a_3-a_2)\overline{\kappa}_2 x^{a_2}y^{b_2} + (a_4-a_3)\overline{\kappa}_3 x^{a_3}y^{b_3}, \\
\dot{y} &= K[(b_2-b_1)\overline{\kappa}_1 x^{a_1}y^{b_1} + (b_3-b_2)\overline{\kappa}_2 x^{a_2}y^{b_2} + (b_4-b_3)\overline{\kappa}_3 x^{a_3}y^{b_3}],
\end{split}
\end{align}
where
\begin{align*}
\overline{\kappa}_1 = \kappa_1 \overline{x}^{a_1} \overline{y}^{b_1}, \overline{\kappa}_2 = \kappa_2 \overline{x}^{a_2} \overline{y}^{b_2}, 
\overline{\kappa}_3 = \kappa_3 \overline{x}^{a_3} \overline{y}^{b_3}, 
\text{ and } K=\frac{\overline{x}}{\overline{y}}.
\end{align*}
As a result of the scaling, the positive equilibrium is moved to $(1,1)$. Further, it follows by \eqref{eq:chain_binomial} that
\begin{align}
\label{eq:chain_kappasubst}
\begin{split}
\overline{\kappa}_1 &= \lambda h_1, \\
\overline{\kappa}_2 &= \lambda (h_1+h_2), \\
\overline{\kappa}_3 &= \lambda (h_1+h_2+h_3)
\end{split}
\end{align}
for some $\lambda\neq0$, which is positive (respectively, negative) if $h_1$, $h_1+h_2$, $h_1+h_2+h_3$ are all positive (respectively, negative).

Denote by $J$ the Jacobian matrix of \eqref{eq:chain_ode_scaled} at the equilibrium $(1,1)$. A short calculation shows that
\begin{align*}
\det J = \frac{h_1+h_2+h_3}{\lambda}K\overline{\kappa}_1\overline{\kappa}_2\overline{\kappa}_3,
\end{align*}
and thus, $\det J>0$.

\subsection{Three limit cycles}
\label{subsec:chain_three_limit_cycles}

Let us consider now the mass-action system \eqref{eq:chain_network} with
\begin{align*}
(a_1,b_1) = (0,0), (a_2,b_2) = (0,-q), (a_3,b_3) = \left(1,\frac{1}{2}\right), (a_4,b_4) = \left(0,\frac{1}{2}+r\right),
\end{align*}
where $q>0$ and $r>0$, i.e., take the mass-action system
\begin{center}
\begin{tikzpicture}

\draw [step=1, gray, very thin] (-1.25,-1.25) grid (2.25,2.25);
\draw [ ->, black] (-1.25,0)--(2.25,0);
\draw [ ->, black] (0,-1.25)--(0,2.25);

\node[inner sep=0,outer sep=1] (P1) at (0,0) {\large \textcolor{blue}{$\bullet$}};
\node[inner sep=0,outer sep=1] (P2) at (0,-3/4) {\large \textcolor{blue}{$\bullet$}};
\node[inner sep=0,outer sep=1] (P3) at (1,1/2) {\large \textcolor{blue}{$\bullet$}};
\node[inner sep=0,outer sep=1] (P4) at (0,3/2) {\large \textcolor{blue}{$\bullet$}};

\node [above left]  at (P1) {$\mathsf{0}$};
\node [left] at (P2)  {$-q\mathsf{Y}$};
\node [right] at (P3) {$\mathsf{X}+\frac{1}{2}\mathsf{Y}$};
\node [left]  at (P4) {$\left(\frac{1}{2}+r\right)\mathsf{Y}$};

\draw[arrows={-stealth},very thick,blue,left]  (P1) to node {$\kappa_1$} (P2);
\draw[arrows={-stealth},very thick,blue,below right] (P2) to node {$\kappa_2$} (P3);
\draw[arrows={-stealth},very thick,blue,above right] (P3) to node {$\kappa_3$} (P4);

\end{tikzpicture}
\end{center}
Then $h_1=-\left(q+r+\frac{1}{2}\right)$, $h_2=r+\frac{1}{2}$, $h_3=0$, and therefore $h_1$, $h_1+h_2$, $h_1+h_2+h_3$ are all negative, so we take $\lambda$ to be negative. Taking $\lambda=-\frac{1}{q}$ in \eqref{eq:chain_kappasubst}, the associated scaled differential equation \eqref{eq:chain_ode_scaled} takes the form
\begin{align}
\label{eq:chain_ode_scaled_Krq}
\begin{split}
\dot{x} &= y^{-q} - xy^{\frac{1}{2}}, \\
\dot{y} &= K\left[-\left(q+r+\frac{1}{2}\right) + \left(q+\frac{1}{2}\right) y^{-q} + r xy^\frac{1}{2}\right].
\end{split}
\end{align}

Next we prove that there exist $q>0$, $r>0$, $K>0$ such that $\tr J=L_1=L_2=0$ and $L_3<0$, where $L_i$ is the $i$th focal value at the equilibrium $(1,1)$.

\begin{proposition}
\label{prop:chain_L1_L2_L3}
Consider the differential equation \eqref{eq:chain_ode_scaled_Krq}. Then there exist $q>0$, $r>0$, $K>0$ such that $\tr J=L_1=L_2=0$ and $L_3<0$.
\end{proposition}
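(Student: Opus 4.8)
The plan is to transform \eqref{eq:chain_ode_scaled_Krq} into a classical Liénard equation, compute its first three focal values at $(1,1)$ as explicit functions of the parameters, impose $\tr J=0$, and then tune the two remaining free parameters so that $L_1=L_2=0$ while $L_3<0$. The relation $\tr J=0$ cuts out a two-dimensional surface in the parameter space $(q,r,K)$, and $L_1=0$, $L_2=0$ are two further conditions on it, so one expects isolated solutions, one of which we will exhibit.

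The reduction exploits the special form of \eqref{eq:chain_ode_scaled_Krq}: its right-hand sides involve only the monomials $y^{-q}$, $xy^{1/2}$ and a constant, and one checks directly that
\[
\frac{d}{dt}\bigl(y+Kr\,x\bigr)=K\bigl(q+r+\tfrac12\bigr)\bigl(y^{-q}-1\bigr),
\]
a quantity depending on $y$ alone. Setting $w=y+Kr\,x-(1+Kr)$, so that the equilibrium moves to $(y,w)=(1,0)$, and multiplying the vector field by the locally positive factor $y^{-1/2}$, the system becomes the classical Liénard system
\[
\dot y=w-\tilde F(y),\qquad \dot w=-g(y),
\]
with $\tilde F(y)=y+By^{-1/2}-Ay^{-q-1/2}-(1+Kr)$ and $g(y)=By^{-1/2}(1-y^{-q})$, where $A=K(q+\tfrac12)$ and $B=K(q+r+\tfrac12)$. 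Expanding at $y=1$ gives $\tilde F(y)=\sum_{k\ge1}F_k(y-1)^k$ and $g(y)=\sum_{k\ge1}g_k(y-1)^k$, where $F_1=1-\tfrac B2+A(q+\tfrac12)$, $F_k=\binom{-1/2}{k}B-\binom{-q-1/2}{k}A$ for $k\ge2$, and $g_k=\bigl(\binom{-1/2}{k}-\binom{-q-1/2}{k}\bigr)B$ for $k\ge1$ (so $g_1=Bq$); note that $\binom{-q-1/2}{k}$ is a polynomial in $q$. Since a linear change of variables, a translation, and multiplication of a planar vector field by a positive function leave unchanged whether $\tr J=L_1=L_2=0$ and $L_3<0$, it is enough to work with this Liénard system.

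Next, $\tr J=0$ is equivalent to $F_1=0$, i.e.\ $B=2+2K(q+\tfrac12)^2$, equivalently $r=2q^2+q+\tfrac2K$. As this is automatically positive, it imposes no further constraint, and a free pair $q,K>0$ remains. Recall from the discussion preceding \Cref{subsec:chain_three_limit_cycles} that $\det J>0$, so under $\tr J=0$ the eigenvalues at the equilibrium are purely imaginary and the focal values are well defined. I would then run the standard focal-value recursion for a Liénard system — the successive construction of a formal Lyapunov function, as in \Cref{subsec:quadrangle_three_limit_cycles} — to obtain $L_1$, $L_2$, $L_3$ as $\pi$ times explicit algebraic functions of $q$ and $K$; the non-polynomial origin of the system is harmless, since each $L_i$ involves only finitely many of the $F_k$, $g_k$.

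Finally, I would solve $L_1(q,K)=L_2(q,K)=0$. One checks that $L_1$ changes sign on the positive quadrant, so $\{L_1=0\}$ is a nonempty curve; solving $L_1=0$ for $K$ as a function of $q$ (implicit function theorem) and substituting into $L_2$ leaves a one-variable algebraic function that changes sign, and an intermediate-value argument yields $q_0,K_0>0$ (hence $r_0=2q_0^2+q_0+\tfrac2{K_0}>0$) with $\tr J=L_1=L_2=0$; one then evaluates $L_3(q_0,K_0)$ and checks that it is negative. Explicit approximate values of $(q_0,r_0,K_0)$ can be given, and both the existence of the common zero and the sign of $L_3$ there can be certified rigorously, for instance by a Miranda-type sign argument on a small box around $(q_0,K_0)$ combined with interval arithmetic. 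The main obstacle is purely computational: the expressions for $L_2$ and especially $L_3$ are large, and upgrading the final numerical root-finding to a rigorous existence-and-sign statement requires care; the reduction to Liénard form is precisely what keeps the focal-value recursion tractable.
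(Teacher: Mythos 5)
Your overall strategy coincides with the paper's: impose $\tr J=0$ to express one parameter in terms of the others (your $r=2q^2+q+\frac2K$ is exactly the paper's $K=\frac{2}{r-q(2q+1)}$), then kill $L_1$ and $L_2$ successively and check the sign of $L_3$. What you add is the reduction to a classical Li\'enard system via the observation that $\frac{d}{d\tau}(y+Kr\,x)$ depends on $y$ alone; I checked this identity and the resulting $\tilde F$, $g$ and their Taylor coefficients, and they are correct. This is a genuinely different (and attractive) preparatory step that the paper does not make explicit, and it would indeed make the focal-value recursion mechanical; it also connects this subsection to the Li\'enard techniques the paper uses elsewhere (\Cref{subsec:three_reactions_lienard_center}).

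The gap is that your argument stops exactly where the paper's proof actually begins. The content of the proposition is the existence of specific parameter values, and you never produce the focal values or the solution of $L_1=L_2=0$; you only assert that $L_1$ and the restricted $L_2$ ``change sign'' and propose to certify a root numerically by a Miranda/interval-arithmetic argument. That assertion is not self-evident and is precisely what must be verified. Moreover, the anticipated computational obstacle does not materialize: in the paper, after setting $\tr J=0$, the focal value $L_1$ has an explicit numerator $3r(1-2q)-q(4q^2+16q+7)$, forcing $0<q<\frac12$ and $r=\frac{q(4q^2+16q+7)}{3(1-2q)}$; substituting this, $L_2$ factors as a constant times $(2q+7)^2(3-2q)(4q-1)\sqrt{2q+3}$, so $q=\frac14$ is forced exactly, giving $r=\frac{15}{8}$, $K=\frac43$ and the exact value $L_3=-\frac{625\pi}{110592}\sqrt{\frac72}<0$. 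No implicit function theorem, intermediate-value argument, or interval arithmetic is needed. As written, your text is a sound plan whose decisive computations are deferred, so it does not yet establish the proposition; carrying out the Li\'enard focal-value recursion you describe would presumably reproduce the paper's closed-form answer and close the gap.
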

\begin{proof}
Since $\tr J = -1+\frac{r-q(2q+1)}{2}K$, the trace vanishes with $K=\frac{2}{r-q(2q+1)}$ for $q>0$ and $r>q(2q+1)$. Under this, one obtains that
\begin{align*}
L_1 = \frac{\pi r [3r(1-2q)-q(4q^2+16q+7)]}{8(2q+1)[r-q(2q+1)]^\frac{3}{2}\sqrt{2q(q+r+1/2)}}.
\end{align*} 
Taking also into account that $q>0$ and $r>q(2q+1)$, one obtains that $L_1=0$ if and only if $0<q<\frac{1}{2}$ and $r=\frac{q(4q^2+16q+7)}{3(1-2q)}$. Under this, one obtains that
\begin{align*}
L_2 = \frac{\pi (2q+7)^2(3-2q)(4q-1)\sqrt{2q+3}}{1536(2q+1)^4}.
\end{align*}
Taking also into account that $0<q<\frac{1}{2}$, one obtains that $L_2=0$ if and only if $q=\frac{1}{4}$. With this, one computes $L_3$ and gets $L_3=-\frac{625\pi}{110592}\sqrt{\frac{7}{2}}$.

The parameter value for which $\tr J=L_1=L_2=0$ and $L_3<0$ hold are obtained by substitution. This yields $q=\frac{1}{4}$, $r=\frac{15}{8}$, $K=\frac{4}{3}$. \qed
\end{proof}

\begin{corollary}
Consider the differential equation \eqref{eq:chain_ode_scaled_Krq}. Then there exist $q>0$, $r>0$, $K>0$ such that $(1,1)$ is unstable and is surrounded by $3$ limit cycles ($2$ stable and $1$ unstable).
\end{corollary}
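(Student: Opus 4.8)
The plan is to bifurcate three limit cycles out of the equilibrium $(1,1)$ by unfolding the codimension-three focal configuration $\tr J=L_1=L_2=0$, $L_3<0$ produced in \Cref{prop:chain_L1_L2_L3}. By that proposition we may start at the parameter point $q=\frac14$, $r=\frac{15}{8}$, $K=\frac43$, where the first three focal quantities vanish and $L_3<0$, so the equilibrium is (weakly) asymptotically stable. The strategy is the standard iterated Hopf/Andronov bifurcation: first I would check that, inside the three-dimensional parameter space $(q,r,K)$, the three functions $(\tr J, L_1, L_2)$ can be varied independently near this point — i.e.\ that the map $(q,r,K)\mapsto(\tr J,L_1,L_2)$ has a nonsingular derivative there, or at least a surjective one. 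Granting this, I would perform three successive small perturbations: (1) from the point with $\tr J=L_1=L_2=0$, $L_3<0$, perturb so that $L_2$ becomes slightly positive while keeping $\tr J=L_1=0$; since the first nonzero focal value ($L_2>0$) now makes the equilibrium repelling while $L_3<0$ still governs the far field, a stable limit cycle $\Gamma_2$ is born close to $(1,1)$; (2) next perturb so that $L_1$ becomes slightly negative while keeping $\tr J=0$ and $L_2$ still positive: the equilibrium turns weakly attracting again, so an unstable limit cycle $\Gamma_1$ appears between $(1,1)$ and $\Gamma_2$; (3) finally perturb so that $\tr J$ becomes slightly positive: the equilibrium becomes a repelling focus, and a third, stable limit cycle $\Gamma_0$ is created between $(1,1)$ and $\Gamma_1$. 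Each perturbation is taken small enough that the previously created cycles persist (hyperbolicity of $\Gamma_0,\Gamma_1$ is automatic from their creation as simple Hopf cycles; $\Gamma_2$ is hyperbolic because it is created from a weak focus with $L_2\neq0$). This yields an unstable equilibrium surrounded by three limit cycles, stable–unstable–stable from inside out, so two stable and one unstable, as claimed.

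Concretely I would keep $q=\frac14$ fixed and work in the two remaining parameters, or re-examine the one-parameter curves already computed in the proof of \Cref{prop:chain_L1_L2_L3}: there $\tr J=0$ cuts out a surface on which $L_1$ is an explicit function of $(q,r)$; $\tr J=L_1=0$ cuts out a curve on which $L_2$ is an explicit function of $q$ vanishing transversally at $q=\frac14$ (the factor $(4q-1)$ in the displayed formula for $L_2$ shows the zero is simple); and likewise $L_1$ vanishes transversally in $r$ along $\tr J=0$ near $q=\tfrac14$ (the numerator $3r(1-2q)-q(4q^2+16q+7)$ is affine and nonconstant in $r$). These transversality facts are exactly what is needed to realise the three independent sign changes in the prescribed order, so the nested Hopf argument goes through. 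Once the three cycles exist near the equilibrium, permanence (\Cref{thm:wr_permanence} does not apply here since the chain is not weakly reversible, but the explicit scaled system \eqref{eq:chain_ode_scaled_Krq} has a positive equilibrium and one checks directly, or cites the relevant boundedness, that trajectories stay in a compact region) together with the Poincaré–Bendixson theorem is not even required for the count — the three cycles are already guaranteed by the bifurcation analysis itself.

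The main obstacle is the verification that $(\tr J,L_1,L_2)$ really do serve as independent bifurcation parameters near $q=\frac14$, $r=\frac{15}{8}$, $K=\frac43$ — equivalently, that the degeneracy has exact codimension three and the unfolding is versal. In principle this reduces to checking the rank of a $3\times3$ Jacobian of explicit (if cumbersome) algebraic expressions; the formulas in \Cref{prop:chain_L1_L2_L3} already exhibit the needed transversality in two of the three directions (the simple zeros of $L_2$ in $q$ and of $L_1$ in $r$), and the remaining direction is the ordinary Hopf transversality $\partial(\tr J)/\partial K\neq0$, which is immediate from $\tr J=-1+\tfrac{r-q(2q+1)}{2}K$. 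A secondary, more bookkeeping-type obstacle is making the perturbation sizes quantitatively consistent — each new cycle must be created strictly inside the annulus bounded by the equilibrium and the previously created cycle, which is the content of the usual "sufficiently small perturbation" clause and is handled by choosing the perturbations in decreasing order of magnitude. Neither obstacle is deep; both are the routine analytic backbone of a generalized Hopf (Bautin-type) bifurcation of codimension three.
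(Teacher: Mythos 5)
Your proposal is correct and follows essentially the same route as the paper: start at $q=\tfrac14$, $r=\tfrac{15}{8}$, $K=\tfrac43$ where $\tr J=L_1=L_2=0$ and $L_3<0$, then successively push $L_2>0$, $L_1<0$, and $\tr J>0$ (each perturbation small relative to the previous one) to peel off the stable cycle $\Gamma_2$, the unstable cycle $\Gamma_1$, and the stable cycle $\Gamma_0$ in turn; the transversality facts you cite (the simple zero of $L_2$ at $q=\tfrac14$, the affine dependence of $L_1$'s numerator on $r$, and $\partial(\tr J)/\partial K\neq0$) are exactly what the paper's explicit perturbations of $q$, then $r$, then $K$ exploit. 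You are also right that, unlike the quadrangle example, no permanence or Poincar\'e--Bendixson argument is needed here since all three cycles come from the local bifurcation.
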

\begin{proof}
Take $q=\frac{1}{4}$, $r=\frac{15}{8}$, $K=\frac{4}{3}$. As we saw in the proof of \Cref{prop:chain_L1_L2_L3}, then $\tr J = L_1 = L_2 = 0$ and $L_3<0$. Since the first nonzero focal value is negative, the equilibrium $(1,1)$ is asymptotically stable.

First, perturb $q$ to a slightly larger value, and simultaneously perturb $r$ and $K$ in order to maintain the relations $r=\frac{q(4q^2+16q+7)}{3(1-2q)}$ and $K=\frac{2}{r-q(2q+1)}$. Then $L_2>0$, and thus the equilibrium $(1,1)$ becomes unstable, and a stable limit cycle $\Gamma_2$ is created.

Next, perturb $r$ to a slightly smaller value, and simultaneously perturb $K$ in order to maintain the relation $K=\frac{2}{r-q(2q+1)}$. Then $L_1<0$, and thus the equilibrium $(1,1)$ becomes asymptotically stable, and an unstable limit cycle $\Gamma_1$ is created.

Finally, perturb $K$ to a slightly larger value. Then $\tr J>0$, and thus the equilibrium $(1,1)$ becomes unstable, and a stable limit cycle $\Gamma_0$ is created. \qed
\end{proof}

We remark (without proving) that the mass-action systems of this subsection are permanent for all $q>0$ and $r>0$. In particular, the ones with at least three limit cycles are permanent.

\subsection{Reversible center}
\label{subsec:chain_reversible_center}

Let us consider now the mass-action system \eqref{eq:chain_network} with
\begin{align}
\label{eq:chain_reversible_abpq}
(a_1,b_1) = (0,0), (a_2,b_2) = (p,q), (a_3,b_3) = (q,p), (a_4,b_4) = \left(q-p,p+\frac{q^2}{p}\right),
\end{align}
where $pq<0$ and $p+q\neq 0$. We will prove that the unique positive equilibrium of this mass-action system is a center, provided the rate constants $\kappa_1$, $\kappa_2$, $\kappa_3$ are set appropriately.

By taking $\lambda=-\frac{1}{p^2-q^2}$ in \eqref{eq:chain_kappasubst}, we have $\overline{\kappa}_1=\frac{p-q}{p}$, $\overline{\kappa}_2=-\frac{q}{p-q}$, $\overline{\kappa}_3=1$, which are indeed all positive under the assumptions on $p$ and $q$. Setting $K=-\frac{p}{q}$, the associated scaled differential equation \eqref{eq:chain_ode_scaled} then takes the form
\begin{align}
\label{eq:chain_ode_scaled_reversible}
\begin{split}
\dot{x} &= (p-q)+q x^p y^q - p x^q y^p, \\
\dot{y} &= (q-p)+p x^p y^q - q x^q y^p.
\end{split}
\end{align}

\begin{proposition}
The equilibrium $(1,1)$ is a center of the differential equation \eqref{eq:chain_ode_scaled_reversible}, provided $pq<0$ and $p+q\neq 0$ hold.
\end{proposition}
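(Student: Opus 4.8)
The plan is to exhibit a symmetry that forces the equilibrium to be a center by the classical reversibility criterion: if a planar vector field is invariant (up to time reversal) under an involution that is a reflection, and the equilibrium lies on the fixed-point line of that involution, then the equilibrium is a center whenever its linearization is. First I would check the linear part: the Jacobian at $(1,1)$ of \eqref{eq:chain_ode_scaled_reversible} has zero trace (this is built into the construction, since $K=-p/q$ was chosen precisely to kill the trace) and positive determinant (shown in the paragraph preceding \Cref{subsec:chain_three_limit_cycles}), so the eigenvalues are a nonzero purely imaginary pair. Hence it suffices to produce the reversing symmetry.

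The key observation is that the right-hand side of \eqref{eq:chain_ode_scaled_reversible} is almost antisymmetric under swapping $x\leftrightarrow y$: writing $f(x,y)=(p-q)+qx^py^q-px^qy^p$ and $g(x,y)=(q-p)+px^py^q-qx^qy^p$, one has $g(x,y)=-f(y,x)$ and $f(x,y)=-g(y,x)$. Therefore, under the linear involution $R(x,y)=(y,x)$, the vector field $(f,g)$ satisfies $(f,g)\circ R = -DR\cdot(f,g)$, i.e. the system is reversible with respect to the reflection across the diagonal $\{x=y\}$. Since the equilibrium $(1,1)$ lies on this diagonal, every trajectory near $(1,1)$ is mapped by $R$ to another trajectory traversed in the opposite time direction; combined with the fact that $(1,1)$ is a linear center (focus or center), this rules out spiralling and forces all nearby orbits to be closed. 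Concretely, one argues that the return map on the half-line $\{x=y, x>1\}$ (transverse to the flow since the vector field there is not parallel to the diagonal — this uses $p\neq q$, guaranteed by $pq<0$) is an involution conjugate to its own inverse, hence the identity, so every orbit through that half-line is periodic. This is the standard "time-reversible $\Rightarrow$ center" argument (see e.g. the references on reversible systems cited in the abstract).

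The only real work, and the main potential obstacle, is verifying the antisymmetry relations $g(x,y)=-f(y,x)$ and $f(x,y)=-g(y,x)$ cleanly — but this is immediate from the explicit formulas: swapping $x$ and $y$ interchanges the monomials $x^py^q$ and $x^qy^p$ and leaves the constants, so $f(y,x)=(p-q)+qx^qy^p-px^py^q=-g(x,y)$, and similarly for the other relation. One should also confirm the transversality of $\{x=y\}$ to the flow near $(1,1)$: on the diagonal, $f(x,x)=(p-q)+qx^{p+q}-px^{p+q}=(p-q)(1-x^{p+q})$, which is nonzero for $x\neq 1$ (using $p\neq q$ and $p+q\neq 0$), so the diagonal is a genuine transversal section away from the equilibrium, and the reversibility argument goes through. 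Finally, I would note that the hypothesis $p+q\neq 0$ is exactly what makes $x^{p+q}$ a non-constant monomial (otherwise the diagonal would be a line of equilibria and the picture degenerates), and $pq<0$ is what was needed earlier to make all the $\overline{\kappa}_i$ and $K$ positive so that \eqref{eq:chain_ode_scaled_reversible} genuinely arises from the mass-action system.
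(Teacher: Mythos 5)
Your proposal is correct and follows essentially the same route as the paper's proof: verify that the Jacobian at $(1,1)$ is a nonzero multiple of a rotation (purely imaginary eigenvalues), observe the antisymmetry $g(x,y)=-f(y,x)$, and invoke reversibility with respect to the line $x=y$ to conclude that $(1,1)$ is a center. The extra details you supply (transversality of the diagonal, the return-map involution argument) are correct elaborations of what the paper leaves implicit.
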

\begin{proof}
Note that the Jacobian matrix at $(1,1)$ equals to $(p^2-q^2)\begin{pmatrix*}[r]0 & -1\\ 1 & 0 \end{pmatrix*}$. Thus, the eigenvalues are purely imaginary. Since the differential equation \eqref{eq:chain_ode_scaled_reversible} is of the form
\begin{align*}
\dot{x}&=f(x,y),\\
\dot{y}&=-f(y,x),
\end{align*}
the system is reversible w.r.t. the line $x=y$ and $(1,1)$ is indeed a center. \qed
\end{proof}

We depicted the typical phase portraits in \Cref{fig:chain_homoclinic}. The one for $p+q>0$ suggests that the closed orbits are surrounded by a homoclinic orbit at the origin. In \Cref{prop:chain_homoclinic} we show that this is indeed the case.

\begin{figure}[ht]
\begin{center}
\input{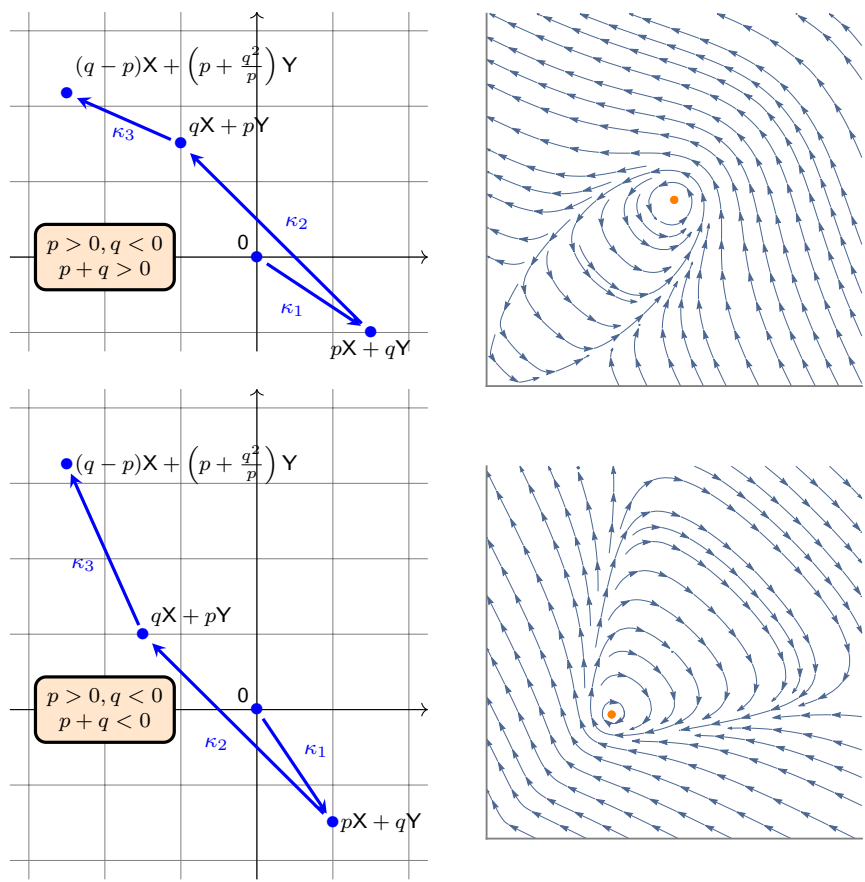}
\end{center}
\caption{The mass-action systems \eqref{eq:chain_ode} with the substitution \eqref{eq:chain_reversible_abpq} (left column), and the phase portraits of the corresponding scaled differential equation \eqref{eq:chain_ode_scaled_reversible} (right column). The top row is for $p+q>0$, while the bottom row is for $p+q<0$. Notice that the union of closed orbits is bounded for $p+q>0$, and unbounded for $p+q<0$.}
\label{fig:chain_homoclinic}
\end{figure}

\begin{proposition}
\label{prop:chain_homoclinic}
Consider the differential equation \eqref{eq:chain_ode_scaled_reversible} with $p > 0$, $q < 0$, and $p+q>0$. Then the region consisting of closed orbits is bounded. Furthermore, all closed orbits lie inside a homoclinic orbit, whose $\alpha$-- and $\omega$--limit is the origin.
\end{proposition}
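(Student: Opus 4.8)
The plan is to replace \eqref{eq:chain_ode_scaled_reversible} by the orbit-equivalent system obtained by multiplying the right-hand side by the positive function $x^{-q}y^{-q}$. With $n:=p-q>0$ and $m:=-q>0$ (so that $p=n-m$ and, since $p+q>0$, $n>2m$), this reads
\begin{align*}
\dot x &= nx^my^m-mx^n-py^n,\\
\dot y &= -nx^my^m+px^n+my^n;
\end{align*}
it remains reversible with respect to $\{x=y\}$, its only equilibrium in $\mathbb{R}^2_+$ is still $(1,1)$, and now the origin is an equilibrium as well. Two elementary facts drive everything. First, $\frac{d}{dt}(x+y)=(n-2m)(x^n-y^n)$ has the sign of $x-y$, so on every orbit $x+y$ increases in $\{x>y\}$ and decreases in $\{x<y\}$. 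Second, the weighted AM--GM inequalities $mx^n+py^n\ge nx^my^p$ and $px^n+my^n\ge nx^py^m$ yield $\dot x<0$ on $\{y\ge1\}\setminus\{(1,1)\}$ and $\dot y>0$ on $\{x\ge1\}\setminus\{(1,1)\}$.

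\emph{Boundedness of the period annulus.} One checks that every closed orbit encloses $(1,1)$ and hence meets the diagonal in two points $(b,b),(a,a)$ with $0<b<1<a$; moreover $x+y$ is maximized along the orbit at the point where it passes from $\{x>y\}$ to $\{x<y\}$, so that maximum equals $2a$. I claim the orbit $O_a$ through $(a,a)$ is not closed once $a>(n/m)^{1/p}$. From $(a,a)$ it enters $\{x<y\}$; while $x\ge1$ one has $\dot x<0$, $\dot y>0$ and $\bigl|\tfrac{dy}{dx}\bigr|<1$ (the numerator $px^n+my^n-nx^my^m$ of $\dot y/\dot x$ is less than the denominator $mx^n+py^n-nx^my^m$ of $-\dot x$, their difference being a positive multiple of $x^n-y^n<0$), so $O_a$ reaches $x=1$ at some $(1,y_1)$ with $a<y_1<2a$; from there, as long as $x<1$ and $y>(n/m)^{1/p}$, using $nx^my^m<ny^m$ one gets $\dot y>y^m(my^p-n)>0$ and $\dot x<y^m(n-py^p)<0$, so the orbit can never return to the diagonal. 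Hence $a^\ast:=\sup\{a>1:O_a\ \text{closed}\}$ is finite, every closed orbit lies in $\{x+y\le 2a^\ast\}$, and the period annulus $\mathcal P$ of $(1,1)$ is bounded.

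\emph{The outer boundary is a homoclinic loop.} The outer boundary $\Sigma:=\partial\mathcal P\setminus\{(1,1)\}$ is a nonempty, compact (by the previous step), connected, invariant set. It contains no periodic orbit: such an orbit, enclosing $(1,1)$, would cross the diagonal transversally at some point $q$, and taking a segment of the diagonal itself as a transversal at $q$, the reversibility $\sigma\circ\Phi_t=\Phi_{-t}\circ\sigma$ (where $\sigma(x,y)=(y,x)$ and $\Phi$ denotes the flow) forces the first-return map to satisfy $\Phi_{2\tau(u)}(u)=u$ for all $u$ near $q$ on the transversal; such an orbit is thus accumulated by periodic orbits and cannot lie on $\partial\mathcal P$ (the same observation shows the system has no limit cycle, so the closed orbits are exactly the orbits in $\mathcal P$). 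By the Poincar\'e--Bendixson theorem $\Sigma$ is therefore a polycycle; since $(1,1)$ is a center, nothing tends to it, so the only equilibrium on $\Sigma$ is the origin, and as the boundary of the topological disk $\mathcal P$ the set $\Sigma$ is a single loop: the origin together with one orbit $\gamma$ whose $\alpha$- and $\omega$-limit is $\{(0,0)\}$. Every closed orbit lies in $\mathcal P$, hence inside $\gamma$, which proves both assertions.

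The main obstacle is the boundedness step: one has to convert the heuristic ``near the $y$-axis the orbit escapes to infinity'' into the precise monotonicity above, and the two AM--GM estimates together with the bound $\bigl|\tfrac{dy}{dx}\bigr|<1$ are exactly what accomplishes this. A secondary delicate point is the reversibility argument that rules out a limit-cycle outer boundary. An alternative to the whole argument is to resolve the degenerate origin directly by a quasi-homogeneous blow-up---its Newton polygon has vertices $(0,n)$, $(m,m)$, $(n,0)$---to extract the separatrix emanating into $\{x>y\}$ tangent to the positive $x$-axis, and to close it up using the reversibility and the monotonicity of $x+y$; on that route the blow-up is the technical crux.
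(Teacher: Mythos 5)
Your first half --- boundedness of the region of closed orbits --- is correct and is a genuine alternative to the paper's argument: the paper simply checks that $\dot x<0$ on a far-right vertical segment $\{x=L,\ 0<y<L\}$ and invokes reversibility, whereas you rescale time by $x^{-q}y^{-q}$, exploit the monotonicity of $x+y$ off the diagonal together with two weighted AM--GM estimates, and show that the orbit through $(a,a)$ never returns to the diagonal once $a>(n/m)^{1/p}$. I checked the computations (the sign of $\tfrac{d}{dt}(x+y)$, the bound $\lvert dy/dx\rvert<1$ in $\{x<y,\ x\ge 1\}$, the escape estimates for $x<1$, and the reversibility identity $\Phi_{2\tau(u)}(u)=u$ that excludes limit cycles) and they are all sound.

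The second half is where your route departs from the paper's, and where there is a genuine gap. The paper \emph{constructs} the homoclinic orbit: it desingularizes the origin via $z=x^{-p}y^{-q}$, finds a semi-hyperbolic equilibrium at $(0,\tfrac{p}{p-q})$ on the invariant $z$-axis, shows the flow on its center manifold moves away from the origin, pulls this separatrix back to the toric ray $x^py^q=\tfrac{p-q}{p}$, and closes it up by the trapping estimate plus reversibility. You instead want to read the homoclinic loop off as $\partial\mathcal P$, and the sentence ``by the Poincar\'e--Bendixson theorem $\Sigma$ is therefore a polycycle'' is not justified as written. Poincar\'e--Bendixson applies to $\omega$-limit sets of orbits whose forward closure is compact in an open set where the field is locally Lipschitz; your set $\Sigma$ accumulates at the corner of the quadrant, where the rescaled field involves $x^my^m$, $x^n$, $y^n$ with \emph{real} exponents that can be smaller than $1$ (e.g.\ $p=0.2$, $q=-0.1$ gives $m=0.1$, $n=0.3$), so the field is merely continuous there, uniqueness can fail on the axes, and the origin is a degenerate equilibrium. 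To complete your argument you must (i) rule out that orbits in $\Sigma$ accumulate on the open positive axes, (ii) actually produce a nontrivial orbit in $\Sigma$ converging to the origin in both time directions --- this is precisely what the paper's blow-up delivers and nothing in your soft argument supplies it --- and (iii) show $\partial\mathcal P$ is a single Jordan loop rather than, say, the origin with several nested homoclinic loops; ``boundary of a topological disk'' alone does not yield a Jordan curve. The quasi-homogeneous blow-up you relegate to an ``alternative'' is in fact the paper's proof and is the ingredient your polycycle step is missing. On the credit side, your framing via $\partial\mathcal P$ would, once repaired, justify the clause ``all closed orbits lie inside the homoclinic orbit'' more explicitly than the paper does.
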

\begin{proof} 
At the rightmost point of a closed orbit of \eqref{eq:chain_ode_scaled_reversible}, $\dot x = 0$ and $x > y > 0$ hold. We show that for $L$ sufficiently large, $\dot x < 0$ holds on the vertical line segment $\{(x,y)\mid x=L, 0 < y < L\}$. Indeed,
\begin{align*}
\dot{x} = p-q + qL^p y^q - p L^q y ^p < p-q + q L^{p+q} < 0 \text{ for } L > \left(1 - \frac{p}{q}\right)^{\frac1{p+q}},  
\end{align*}
where we used $p>0$, $q<0$, and $p+q>0$. By symmetry/reversibility, all closed orbits are contained in the square $[0,L]^2$.

We next show that there is an invariant curve asymptotic to the toric ray $x^p y^q = \frac{p-q}{p}$ at the origin on which the flow goes  away from the origin.  
Let $z = x^{-p}y^{-q}$ and rewrite (or \lq\lq blow up\rq\rq) the system \eqref{eq:chain_ode_scaled_reversible} in terms of $(x,z)$. Using $y = x^{-\frac{p}{q}} z^{-\frac1{q}}$, we obtain
\begin{align*}
\dot x &= p-q + qz^{-1} - px^{\frac{q^2 - p^2}{q}} z^{-\frac{p}{q}},\\
\dot z &= -pz \frac{\dot x}{x} - qz\frac{\dot y}{y} = 
 -qx^{\frac{p}{q}} z^{\frac1{q}} [ (q-p)z + p + \cdots ],
\end{align*}
where $\cdots$ stands for four more terms of higher order near $x = 0$, i.e., with $x$ having a positive exponent. After we multiply this transformed system $(\dot x, \dot z)$ by $x^{-\frac{p}{q}} z^{-\frac1{q}}$, we obtain a system that is smooth on the non-negative quadrant $\mathbb{R}^2_{\geq0}$, with the $z$-axis being invariant. On the $z$-axis we have
\begin{align*}
\dot z = -q [p+ (q-p)z]
\end{align*}
with an equilibrium at $(\widehat{x},\widehat{z}) = \left(0,\frac{p}{p-q}\right)$. Since $-\frac{p}{q} > 1$, $\frac{q^2 - p^2}{q}> 0$, and
\begin{align*}
\dot x &= x^{-\frac{p}{q}} z^{-\frac1{q}}\left(p-q + qz^{-1} - px^{\frac{q^2 - p^2}{q}} z^{-\frac{p}{q}}\right),
\end{align*}
the eigenvalue at $(\widehat{x}, \widehat{z})$, transverse to the $z$-axis is zero. Near the equilibrium $(\widehat{x}, \widehat{z})$,
\begin{align*}
\dot x \approx x^{-\frac{p}{q}} {\widehat{z}}^{-\frac1{q}}[p-q + q\widehat{z}^{-1}] = x^{-\frac{p}{q}} {\widehat{z}}^{-\frac1{q}}\frac{p^2-q^2}{p} > 0.
\end{align*}
Therefore, the flow on the center manifold goes in the positive $x$--direction. Transforming this center manifold back produces the promised invariant curve along the toric ray $x^p y^q = \frac{p-q}{p}$.  

Since $\dot y > 0$ near the $x$--axis and $\dot x < 0$ on the vertical line segment $(L,y)$ for large $L$, this invariant curve has to cross the line $y=x$. 
By symmetry/reversibility, the mirror image connects back to the origin, following the toric ray  $x^q y^p = \frac{p-q}{p}$ when approaching the origin as $\tau \to +\infty$. Therefore, this curve is a homoclinic orbit.
\qed 
\end{proof}

\section{Three reactions}
\label{sec:three_reactions}

In this section we study the mass-action system
\begin{align}
\label{eq:three_reactions_network}
\begin{split}
a_1 \mathsf{X} + b_1 \mathsf{Y} &\stackrel{\kappa_1}{\longrightarrow} (a_1+c_1) \mathsf{X} + (b_1+d_1) \mathsf{Y} \\
a_2 \mathsf{X} + b_2 \mathsf{Y} &\stackrel{\kappa_2}{\longrightarrow} (a_2+c_2) \mathsf{X} + (b_2+d_2) \mathsf{Y} \\
a_3 \mathsf{X} + b_3 \mathsf{Y} &\stackrel{\kappa_3}{\longrightarrow} (a_3+c_3) \mathsf{X} + (b_3+d_3) \mathsf{Y}
\end{split}
\end{align}
and its associated differential equation
\begin{align}
\label{eq:three_reactions_ode}
\begin{split}
\dot{x} &= c_1\kappa_1 x^{a_1}y^{b_1} + c_2\kappa_2 x^{a_2}y^{b_2} + c_3\kappa_3 x^{a_3}y^{b_3}, \\
\dot{y} &= d_1\kappa_1 x^{a_1}y^{b_1} + d_2\kappa_2 x^{a_2}y^{b_2} + d_3\kappa_3 x^{a_3}y^{b_3}
\end{split}
\end{align}
under the non-degeneracy assumptions that
\begin{align}
\label{eq:three_reactions_nondeg}
\begin{split}
&(a_1,b_1), (a_2,b_2), (a_3,b_3) \text{ do not lie on a line,}\\
&\text{none of }(c_1,d_1), (c_2,d_2), (c_3,d_3)\text{ equals to }(0,0),\text{ and}\\
&(c_1,d_1), (c_2,d_2), (c_3,d_3)\text{ span }\mathbb{R}^2.
\end{split}    
\end{align}

Our first goal is to understand the number of positive equilibria. We find that $(\overline{x},\overline{y})\in\mathbb{R}^2_+$ is an equilibrium if and only if
\begin{align}
\label{eq:three_reactions_binomial}
\begin{split}
(c_1d_2-c_2d_1)\kappa_1 \overline{x}^{a_1} \overline{y}^{b_1}&=(c_2d_3-c_3d_2)\kappa_3 \overline{x}^{a_3} \overline{y}^{b_3},\\
(c_3d_1-c_1d_3)\kappa_1 \overline{x}^{a_1} \overline{y}^{b_1}&=(c_2d_3-c_3d_2)\kappa_2 \overline{x}^{a_2} \overline{y}^{b_2}.
\end{split}
\end{align}
Notice that, by the non-degeneracy assumptions \eqref{eq:three_reactions_nondeg}, the three numbers $c_2d_3-c_3d_2$, $c_3d_1-c_1d_3$, $c_1d_2-c_2d_1$ cannot all be zero. Thus, taking also into account that $(a_1,b_1)$, $(a_2,b_2)$, $(a_3,b_3)$ do not lie on a line, the existence of a positive equilibrium is equivalent to
\begin{align}
\label{eq:three_reactions_common_sign}
\sgn(c_2d_3-c_3d_2)=\sgn(c_3d_1-c_1d_3)=\sgn(c_1d_2-c_2d_1)\neq0.
\end{align}
Furthermore, once there exists a positive equilibrium, it is unique. Note also that whether there exists a positive equilibrium is independent of the choice of the rate constants $\kappa_1$, $\kappa_2$, $\kappa_3$.

Now that we understand when the mass-action system \eqref{eq:three_reactions_network} has a positive equilibrium, our next goal is to find parameter values for which the equilibrium is surrounded by four limit cycles (\Cref{subsec:three_reactions_four_limit_cycles}) or by a continuum of closed orbits (\Cref{subsec:three_reactions_reversible_center,subsec:three_reactions_lienard_center}). We remark that the center problem is solved in the special case when one of the reactions is vertical and another one is horizontal \cite{boros:hofbauer:mueller:regensburger:2017}. Further, the existence of two limit cycles is also discussed there.

We prepare for the rest of this section by moving the equilibrium to $(1,1)$. Linear scaling of the differential equation \eqref{eq:three_reactions_ode} by the equilibrium $(\overline{x},\overline{y})$, followed by a multiplication by $\overline{x}$ yields
\begin{align}
\label{eq:three_reactions_ode_scaled}
\begin{split}
\dot{x} &= c_1\overline{\kappa}_1 x^{a_1}y^{b_1} + c_2\overline{\kappa}_2 x^{a_2}y^{b_2} + c_3\overline{\kappa}_3 x^{a_3}y^{b_3}, \\
\dot{y} &= K(d_1\overline{\kappa}_1 x^{a_1}y^{b_1} + d_2\overline{\kappa}_2 x^{a_2}y^{b_2} + d_3\overline{\kappa}_3 x^{a_3}y^{b_3}),
\end{split}
\end{align}
where
\begin{align}
\label{eq:three_reactions_kappa_kappabar}
\overline{\kappa}_1 = \kappa_1 \overline{x}^{a_1} \overline{y}^{b_1}, \overline{\kappa}_2 = \kappa_2 \overline{x}^{a_2} \overline{y}^{b_2}, 
\overline{\kappa}_3 = \kappa_3 \overline{x}^{a_3} \overline{y}^{b_3}, 
\text{ and } K=\frac{\overline{x}}{\overline{y}}.
\end{align}
As a result of the scaling, the positive equilibrium is moved to $(1,1)$. Further, it follows by \eqref{eq:three_reactions_binomial} that
\begin{align}
\label{eq:three_reactions_kappasubst}
\begin{split}
\overline{\kappa}_1 &= \lambda(c_2d_3-c_3d_2), \\
\overline{\kappa}_2 &= \lambda(c_3d_1-c_1d_3), \\
\overline{\kappa}_3 &= \lambda(c_1d_2-c_2d_1)
\end{split}
\end{align}
for some $\lambda\neq0$, which is positive (respectively, negative) if the common sign in \eqref{eq:three_reactions_common_sign} is positive (respectively, negative).

Denote by $J$ the Jacobian matrix of \eqref{eq:three_reactions_ode_scaled} at the equilibrium $(1,1)$. A short calculation shows that
\begin{align}
\label{eq:three_reactions_detJ}
\det J = \frac{1}{\lambda}K \overline{\kappa}_1\overline{\kappa}_2\overline{\kappa}_3[a_1(b_2-b_3)+a_2(b_3-b_1)+a_3(b_1-b_2)].
\end{align}

\subsection{Four limit cycles}
\label{subsec:three_reactions_four_limit_cycles}

In this subsection we discuss why we strongly conjecture that there exist parameter values for which the differential equation \eqref{eq:three_reactions_ode} has at least $4$ limit cycles.

Let us consider now the mass-action system \eqref{eq:three_reactions_network} with
\begin{center}
\begin{tabular}{ccc}
$\begin{aligned}
(a_1,b_1) &= (0,0), \\
(c_1,d_1) &= (0,-1),
\end{aligned}$ & $\begin{aligned}
(a_2,b_2) &= (0,-1), \\
(c_2,d_2) &= (1,-1),
\end{aligned}$ & $\begin{aligned}
(a_3,b_3) &= (a,b), \\
(c_3,d_3) &= (-1,d)
\end{aligned}$
\end{tabular}
\end{center}
for $a>0$, $b>-1$, $d>0$ with $1+bd>0$, i.e., take the mass-action system
\begin{center}
\begin{tikzpicture}

\draw [step=1, gray, very thin] (-1.25,-2.25) grid (2.25,3.25);
\draw [ ->, black] (-1.25,0)--(2.25,0);
\draw [ ->, black] (0,-2.25)--(0,3.25);

\node[inner sep=0,outer sep=1] (P1) at (0,0) {\large \textcolor{blue}{$\bullet$}};
\node[inner sep=0,outer sep=1] (P2a) at (0,-1) {\large \textcolor{blue}{$\bullet$}};
\node[inner sep=0,outer sep=1] (P2b) at (1,-2) {\large \textcolor{blue}{$\bullet$}};
\node[inner sep=0,outer sep=1] (P3a) at (1.1,0.6) {\large \textcolor{blue}{$\bullet$}};
\node[inner sep=0,outer sep=1] (P3b) at (0.1,2.6) {\large \textcolor{blue}{$\bullet$}};

\node [above left]  at (P1) {$\mathsf{0}$};
\node [below left] at (P2a)  {$-\mathsf{Y}$};
\node [above right] at (P2b)  {$\mathsf{X}-2\mathsf{Y}$};
\node [right] at (P3a) {$a\mathsf{X}+b\mathsf{Y}$};
\node [right]  at (P3b) {$(a-1)\mathsf{X}+(b+d)\mathsf{Y}$};

\draw[arrows={-stealth},very thick,blue,left]  (P1) to node {$\kappa_1$} (P2a);
\draw[arrows={-stealth},very thick,blue,below left] (P2a) to node {$\kappa_2$} (P2b);
\draw[arrows={-stealth},very thick,blue,right] (P3a) to node {$\kappa_3$} (P3b);

\end{tikzpicture}
\end{center}
Then $\overline{\kappa}_1 = \lambda(d-1)$, $\overline{\kappa}_2 = \lambda$, $\overline{\kappa}_3 = \lambda$ in \eqref{eq:three_reactions_kappasubst}, and hence, the existence of a positive equilibrium is equivalent to $d>1$. Take $\lambda=1$. With these, the scaled differential equation takes the form
\begin{align*}
\dot{x} &= \frac{1}{y} - x^{a}y^{b}, \\
\dot{y} &= K\left(-(d-1) - \frac{1}{y} + d x^{a}y^{b}\right),
\end{align*}
and, by \eqref{eq:three_reactions_detJ}, $\det J = K(d-1)a$, which is positive, because $d>1$, $a>0$.

Set $K=\frac{a}{1+bd}$ to make $\tr J$ equal to zero. The first focal value, $L_1$, is then
\begin{align*}
L_1 = \frac{\pi a (2a^2d+a(1+bd)-(1+bd)^2)}{8a\sqrt{(d-1)(1+bd)^3}},
\end{align*}
which vanishes for $b=\frac{-2+a(1+\sqrt{1+8d})}{2d}$. After the elimination of $b$ by this, one finds that the second focal value, $L_2$, vanishes along a curve in the $(a,d)$--plane. That curve contains the points $\left(1,\frac{165}{49}\right)$ and $\left(\frac{1+\sqrt{3961}}{60},3\right)$, and it turns out the third focal value, $L_3$, is negative at the former point and positive at the latter one. Thus, there exist $a$, $b$, $d$ such that $L_1=L_2=L_3=0$ (namely, we numerically find that this happens at $a = \widehat{a} \approx 1.01282$, $b = \widehat{b} \approx 0.65463$, $d = \widehat{d} \approx 3.28862$). Since, again numerically, we see that $L_4$ is negative for $\widehat{a}$, $\widehat{b}$, $\widehat{d}$, we conjecture that there exist parameter values for which the unique positive equilibrium of the differential equation \eqref{eq:three_reactions_ode} is asymptotically stable, and is surrounded by four limit cycles $\Gamma_0$, $\Gamma_1$, $\Gamma_2$, $\Gamma_3$, which are unstable, stable, unstable, stable, respectively. Since the formulas for $L_2$, $L_3$, and $L_4$ get complicated, we cannot handle them analytically. This is why we leave the existence of four limit cycles a conjecture.

\subsection{Reversible center}
\label{subsec:three_reactions_reversible_center}

Let us consider now the mass-action system \eqref{eq:three_reactions_network} with 
\begin{align}
\label{eq:three_reactions_reversible_ab_pq}
(a_1,b_1)=(0,0),(a_2,b_2)=(p,q), (a_3,b_3)=(q,p),
\end{align}
assuming $\lvert p\rvert\neq\lvert q\rvert$. Its associated scaled differential equation is then
\begin{align}
\label{eq:three_reactions_ode_scaled_reversible}
\begin{split}
\dot{x} &= c_1\overline{\kappa}_1 + c_2\overline{\kappa}_2 x^py^q + c_3\overline{\kappa}_3 x^qy^p, \\
\dot{y} &= K(d_1\overline{\kappa}_1 + d_2\overline{\kappa}_2 x^py^q + d_3\overline{\kappa}_3 x^qy^p).
\end{split}
\end{align}

\begin{proposition}
\label{prop:three_reactions_reversible_center}
Consider the differential equation \eqref{eq:three_reactions_ode_scaled_reversible} with \eqref{eq:three_reactions_kappasubst}. Assume that $\frac{1}{\lambda}K(p^2-q^2)>0$ and
\begin{align}
\label{eq:three_reactions_reversibiliy_condition}
\begin{split}
c_1&=-K d_1,\\
c_2\overline{\kappa}_2&=-K d_3\overline{\kappa}_3,\\ c_3\overline{\kappa}_3&=-K d_2\overline{\kappa}_2
\end{split}
\end{align}
hold. Then the equilibrium $(1,1)$ is a center.
\end{proposition}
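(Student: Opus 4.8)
The plan is to argue exactly as in the reversible--center result of \Cref{subsec:chain_reversible_center}: I would show that, under \eqref{eq:three_reactions_reversibiliy_condition}, the vector field of \eqref{eq:three_reactions_ode_scaled_reversible} is reversible with respect to the diagonal $\{x=y\}$, and that the equilibrium $(1,1)$, which lies on this line, is a linear center. Write $f(x,y)=c_1\overline{\kappa}_1 + c_2\overline{\kappa}_2 x^py^q + c_3\overline{\kappa}_3 x^qy^p$ and $g(x,y)=K(d_1\overline{\kappa}_1 + d_2\overline{\kappa}_2 x^py^q + d_3\overline{\kappa}_3 x^qy^p)$ for the two right-hand sides. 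Since $\lvert p\rvert\neq\lvert q\rvert$, the three functions $1$, $x^py^q$, $x^qy^p$ are distinct monomials, and comparing their coefficients shows that the three identities in \eqref{eq:three_reactions_reversibiliy_condition} are together equivalent to the single relation $g(x,y)=-f(y,x)$ (here one uses $\overline{\kappa}_1\neq0$, which holds because $\overline{\kappa}_1>0$ by \eqref{eq:three_reactions_kappasubst}). Thus \eqref{eq:three_reactions_ode_scaled_reversible} takes the form $\dot x=f(x,y)$, $\dot y=-f(y,x)$, so the reflection $(x,y)\mapsto(y,x)$ together with time reversal maps trajectories to trajectories; the system is reversible about $\{x=y\}$.

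Next I would record the two linear-algebra facts needed to finish. First, $(1,1)$ is an equilibrium on the symmetry line: this is built into the scaling, since substituting $x=y=1$ into $f$ and $g$ and using \eqref{eq:three_reactions_kappasubst} gives $0$. Second, the Jacobian $J$ of \eqref{eq:three_reactions_ode_scaled_reversible} at $(1,1)$ has a pair of nonzero purely imaginary eigenvalues. For the determinant, \eqref{eq:three_reactions_detJ} together with $(a_1,b_1)=(0,0)$, $(a_2,b_2)=(p,q)$, $(a_3,b_3)=(q,p)$ makes the bracket equal to $p^2-q^2$, so $\det J=\frac1\lambda K\overline{\kappa}_1\overline{\kappa}_2\overline{\kappa}_3(p^2-q^2)=\big(\tfrac1\lambda K(p^2-q^2)\big)\big(\overline{\kappa}_1\overline{\kappa}_2\overline{\kappa}_3\big)$; since $\overline{\kappa}_1\overline{\kappa}_2\overline{\kappa}_3>0$, the hypothesis $\frac1\lambda K(p^2-q^2)>0$ forces $\det J>0$. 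For the trace, differentiating $f$ and $g$ at $(1,1)$ yields
\begin{align*}
\tr J &= p\,c_2\overline{\kappa}_2 + q\,c_3\overline{\kappa}_3 + K\big(q\,d_2\overline{\kappa}_2 + p\,d_3\overline{\kappa}_3\big)\\
&= p\big(c_2\overline{\kappa}_2 + Kd_3\overline{\kappa}_3\big) + q\big(c_3\overline{\kappa}_3 + Kd_2\overline{\kappa}_2\big),
\end{align*}
which vanishes by the last two relations in \eqref{eq:three_reactions_reversibiliy_condition}. Hence $J$ has eigenvalues $\pm i\sqrt{\det J}$ with $\det J>0$.

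Finally I would invoke the classical principle already used in \Cref{subsec:chain_reversible_center}: a planar analytic system that is reversible with respect to a line and has an equilibrium on that line at which the linearization is a nondegenerate center is in fact a (nonlinear) center there. Concretely, orbits near $(1,1)$ are symmetric about $\{x=y\}$ under the reversing reflection, and an orbit issuing from a point of $\{x=y\}$ close to $(1,1)$ must, because the linearization is of focus/center type, return transversally to $\{x=y\}$; reversibility then closes it up. Therefore $(1,1)$ is a center of \eqref{eq:three_reactions_ode_scaled_reversible}.

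I do not expect a real obstacle here: the substance of the proposition is that the conditions \eqref{eq:three_reactions_reversibiliy_condition} were tailored precisely so that $g(x,y)=-f(y,x)$, and everything else is the bookkeeping of $\tr J$ and $\det J$ plus the standard reversibility-implies-center theorem. The only points requiring a little care are using $\lvert p\rvert\neq\lvert q\rvert$ to make the monomial matching legitimate (and to keep $P_1,P_2,P_3$ off a line, so that the scaling and \eqref{eq:three_reactions_detJ} apply), and noting that the sign hypothesis $\frac1\lambda K(p^2-q^2)>0$ is exactly what rules out $(1,1)$ being a saddle.
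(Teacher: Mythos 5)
Your proposal is correct and follows essentially the same route as the paper: verify that \eqref{eq:three_reactions_reversibiliy_condition} makes the system take the form $\dot x = f(x,y)$, $\dot y = -f(y,x)$ (reversible about $x=y$), check that the trace vanishes and the determinant is positive so the eigenvalues are purely imaginary, and conclude by the reversibility-implies-center principle. You merely spell out the monomial-matching and the final reversibility argument in more detail than the paper does.
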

\begin{proof}
The determinant and the trace of the Jacobian matrix at $(1,1)$ are
\begin{align*}
\frac{1}{\lambda}K \overline{\kappa}_1\overline{\kappa}_2\overline{\kappa}_3(p^2-q^2)\text{ and }
p(c_2\overline{\kappa}_2+K d_3\overline{\kappa}_3)+q(c_3\overline{\kappa}_3+K d_2\overline{\kappa}_2),
\end{align*}
respectively. By the assumptions, the former is positive, the latter is zero, and therefore, the eigenvalues are purely imaginary. Since, by \eqref{eq:three_reactions_reversibiliy_condition}, the differential equation \eqref{eq:three_reactions_ode_scaled_reversible} is of the form
\begin{align*}
\dot{x}&=f(x,y),\\
\dot{y}&=-f(y,x),
\end{align*}
the system is reversible w.r.t. the line $x=y$ and $(1,1)$ is indeed a center. \qed
\end{proof}

\begin{corollary}
\label{cor:three_reactions_reversible_center}
Consider the differential equation \eqref{eq:three_reactions_ode_scaled_reversible} with \eqref{eq:three_reactions_kappasubst}. Assume that
\begin{enumerate}[(i)]
\item $p^2>q^2$,
\item $\sgn c_1 = -\sgn d_1 = \sgn d_2 = -\sgn c_3\neq0$,
\item $\sgn c_2=-\sgn d_3$,
\item $K=-\frac{c_1}{d_1}$
\end{enumerate}
hold. In case $\sgn c_2=-\sgn d_3\neq0$, require additionally that
\begin{align*}
\left\lvert\frac{d_3}{c_3}\right\rvert<\left\lvert\frac{d_1}{c_1}\right\rvert<\left\lvert\frac{d_2}{c_2}\right\rvert \text{ and }\left\lvert\frac{d_1}{c_1}\right\rvert = \sqrt{\left\lvert\frac{d_2}{c_2}\right\rvert \left\lvert\frac{d_3}{c_3}\right\rvert}.
\end{align*}
Then the equilibrium $(1,1)$ is a center.
\end{corollary}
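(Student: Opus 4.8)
The plan is to derive \Cref{cor:three_reactions_reversible_center} from \Cref{prop:three_reactions_reversible_center} by checking that the hypotheses $(i)$--$(iv)$, together with the additional requirements in the case $\sgn c_2 = -\sgn d_3\neq0$, imply the two hypotheses of that proposition: the sign condition $\frac{1}{\lambda}K(p^2-q^2)>0$ and the reversibility relations \eqref{eq:three_reactions_reversibiliy_condition}. It is convenient to fix notation first. Put $\varepsilon=\sgn c_1\in\{-1,+1\}$; then $(ii)$ forces $\sgn d_1=-\varepsilon$, $\sgn d_2=\varepsilon$, $\sgn c_3=-\varepsilon$, and $(iii)$ introduces $\sigma=\sgn c_2=-\sgn d_3$. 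Whenever the denominator is nonzero, write $r_i=d_i/c_i$; then $\sgn r_1=-1$, and if $\sigma\neq0$ also $\sgn r_2=\sgn r_3=\varepsilon\sigma$.

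I would first dispose of the easy pieces. Condition $(iv)$, namely $K=-c_1/d_1$, is precisely the first equation of \eqref{eq:three_reactions_reversibiliy_condition}, and because $\sgn c_1=-\sgn d_1$ it also yields $K>0$. To get the sign condition it suffices, by \eqref{eq:three_reactions_kappasubst}, to show that $\lambda>0$, i.e. that the three minors $c_2d_3-c_3d_2$, $c_3d_1-c_1d_3$, $c_1d_2-c_2d_1$ are positive (they do share a sign, since $\overline{\kappa}_1,\overline{\kappa}_2,\overline{\kappa}_3>0$); then $\frac{1}{\lambda}K(p^2-q^2)>0$ follows from $(i)$ and $K>0$. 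If $c_2=0$ then $d_3=0$ as well, and these minors are $-c_3d_2$, $c_3d_1$, $c_1d_2$, each of sign $\varepsilon^2=1$. If $\sigma\neq0$, use the factorizations $c_2d_3-c_3d_2=c_2c_3(r_3-r_2)$, $c_3d_1-c_1d_3=c_1c_3(r_1-r_3)$, $c_1d_2-c_2d_1=c_1c_2(r_2-r_1)$: from the sign pattern, $\sgn(c_2c_3)=-\varepsilon\sigma$, $\sgn(c_1c_3)=-1$, $\sgn(c_1c_2)=\varepsilon\sigma$, while the chain $|r_3|<|r_1|<|r_2|$ forces $\sgn(r_3-r_2)=-\varepsilon\sigma$, $\sgn(r_1-r_3)=-1$, $\sgn(r_2-r_1)=\varepsilon\sigma$, so every product is $+1$.

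It then remains to verify the second and third equations of \eqref{eq:three_reactions_reversibiliy_condition}. Substituting \eqref{eq:three_reactions_kappasubst}, dividing by $\lambda$, and using $c_1=-Kd_1$, one computes $c_2\overline{\kappa}_2+Kd_3\overline{\kappa}_3=\lambda d_1\,(c_2c_3-K^2d_2d_3)$ and $c_3\overline{\kappa}_3+Kd_2\overline{\kappa}_2=\lambda d_1\,(K^2d_2d_3-c_2c_3)$; since $\lambda d_1\neq0$, both equations reduce to the single identity $K^2d_2d_3=c_2c_3$. For $c_2=0$ this reads $0=0$. For $\sigma\neq0$, $K^2=(c_1/d_1)^2=r_1^{-2}$ and $c_2c_3/(d_2d_3)=r_2^{-1}r_3^{-1}$, so the identity is equivalent to $r_1^2=r_2r_3$, which holds because $r_2r_3>0$ (both factors have sign $\varepsilon\sigma$) and $|r_1|=\sqrt{|r_2|\,|r_3|}=\sqrt{r_2r_3}$ by assumption. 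All hypotheses of \Cref{prop:three_reactions_reversible_center} now hold, so $(1,1)$ is a center; the non-degeneracy assumptions \eqref{eq:three_reactions_nondeg} are automatic, since $(0,0),(p,q),(q,p)$ are non-collinear by $(i)$, no $(c_i,d_i)$ is $(0,0)$, and $c_3d_1-c_1d_3>0$ makes $(c_1,d_1)$ and $(c_3,d_3)$ independent.

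I expect the sign bookkeeping of the second step to be the only real obstacle: one has to separate the degenerate configuration $c_2=d_3=0$ from the generic one, and in the generic case track carefully how the magnitude chain $|r_3|<|r_1|<|r_2|$ interacts with the possibly negative signs of the $r_i$ in order to determine the sign of each $2\times2$ minor $c_id_j-c_jd_i$, and hence of $\lambda$. The rest --- the reduction of \eqref{eq:three_reactions_reversibiliy_condition} to $K^2d_2d_3=c_2c_3$ and the final appeal to \Cref{prop:three_reactions_reversible_center} --- is routine.
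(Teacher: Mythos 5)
Your proposal is correct and follows essentially the same route as the paper's proof: both reduce the corollary to \Cref{prop:three_reactions_reversible_center} by first showing $\lambda>0$ (splitting into the degenerate case $c_2=d_3=0$ and the generic case, where the sign pattern and the chain $\lvert d_3/c_3\rvert<\lvert d_1/c_1\rvert<\lvert d_2/c_2\rvert$ force all three minors to be positive) and then verifying \eqref{eq:three_reactions_reversibiliy_condition} from the geometric-mean condition. Your reduction of the last two reversibility relations to the single identity $K^2d_2d_3=c_2c_3$ just makes explicit what the paper dismisses as "readily checks."
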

\begin{proof}
In case $\sgn c_2=-\sgn d_3=0$, each of $c_2d_3-c_3d_2$, $c_3d_1-c_1d_3$, $c_1d_2-c_2d_1$ is positive (by $(ii)$), and hence $\lambda>0$. Further, $K>0$ (by $(ii)$ and $(iv)$). Taking also into account $(i)$, it follows that $\frac{1}{\lambda}K(p^2-q^2)>0$. Verification of \eqref{eq:three_reactions_reversibiliy_condition} is straightforward.

In case $\sgn c_2=-\sgn d_3\neq0$, either
\begin{align*}
&\sgn c_1 = -\sgn d_1 = -\sgn c_2 = \sgn d_2 = -\sgn c_3 = \sgn d_3\text{ and }\\
&\frac{d_2}{c_2}<\frac{d_1}{c_1}<\frac{d_3}{c_3}<0
\end{align*}
or
\begin{align*}
&\sgn c_1 = -\sgn d_1 = \sgn c_2 = \sgn d_2 = -\sgn c_3 = -\sgn d_3\text{ and }\\
&\frac{d_1}{c_1}<0<\frac{d_3}{c_3}<\frac{d_2}{c_2}.
\end{align*}
In each of these cases, one again obtains $\lambda>0$ and hence $\frac{1}{\lambda}K(p^2-q^2)>0$. By using the fact that $\left\lvert\frac{d_1}{c_1}\right\rvert$ is the geometric mean of $\left\lvert\frac{d_2}{c_2}\right\rvert$ and $\left\lvert\frac{d_3}{c_3}\right\rvert$, one readily checks  \eqref{eq:three_reactions_reversibiliy_condition}.

In any of the above cases, \Cref{prop:three_reactions_reversible_center} concludes the proof. \qed
\end{proof}

We depicted in \Cref{fig:three_reactions_reversible_center} some reaction networks that all fall under \Cref{cor:three_reactions_reversible_center}.

\begin{figure}[ht]
\begin{center}
\begin{tikzpicture}[scale=0.75]

\begin{scope}[shift={(0,0)}]
\draw [step=1, gray, very thin] (-1.25,-1.25) grid (3.25,3.25);
\draw [ ->, black] (-1.25,0)--(3.25,0);
\draw [ ->, black] (0,-1.25)--(0,3.25);
\node[inner sep=0,outer sep=1] (P1a) at (0,0)  {\large \textcolor{blue}{$\bullet$}};
\node[inner sep=0,outer sep=1] (P1b) at (1,-1)  {\large \textcolor{blue}{$\bullet$}};
\node[inner sep=0,outer sep=1] (P2a) at (2,1) {\large \textcolor{blue}{$\bullet$}};
\node[inner sep=0,outer sep=1] (P2b) at (2-1,1+2){\large \textcolor{blue}{$\bullet$}};
\node[inner sep=0,outer sep=1] (P3a) at (1,2) {\large \textcolor{blue}{$\bullet$}};
\node[inner sep=0,outer sep=1] (P3b) at (1-2,2+1){\large \textcolor{blue}{$\bullet$}};
\draw[arrows={-stealth},very thick,blue]  (P1a) to node {} (P1b);
\draw[arrows={-stealth},very thick,blue] (P2a) to node {} (P2b);
\draw[arrows={-stealth},very thick,blue] (P3a) to node {} (P3b);
\node [draw,fill=orange!20,very thick,rounded corners] (box) at (2.1,-0.1) {\scriptsize$\begin{array}{c}p>0\\c_2<0\\d_3>0\end{array}$};
\end{scope}

\begin{scope}[shift={(5.25,0)}]
\draw [step=1, gray, very thin] (-1.25,-1.25) grid (3.25,3.25);
\draw [ ->, black] (-1.25,0)--(3.25,0);
\draw [ ->, black] (0,-1.25)--(0,3.25);
\node[inner sep=0,outer sep=1] (P1a) at (0,0)  {\large \textcolor{blue}{$\bullet$}};
\node[inner sep=0,outer sep=1] (P1b) at (1,-1)  {\large \textcolor{blue}{$\bullet$}};
\node[inner sep=0,outer sep=1] (P2a) at (2,1) {\large \textcolor{blue}{$\bullet$}};
\node[inner sep=0,outer sep=1] (P2b) at (2,1+2){\large \textcolor{blue}{$\bullet$}};
\node[inner sep=0,outer sep=1] (P3a) at (1,2) {\large \textcolor{blue}{$\bullet$}};
\node[inner sep=0,outer sep=1] (P3b) at (1-2,2){\large \textcolor{blue}{$\bullet$}};
\draw[arrows={-stealth},very thick,blue]  (P1a) to node {} (P1b);
\draw[arrows={-stealth},very thick,blue] (P2a) to node {} (P2b);
\draw[arrows={-stealth},very thick,blue] (P3a) to node {} (P3b);
\node [draw,fill=orange!20,very thick,rounded corners] (box) at (2.1,-0.1) {\scriptsize$\begin{array}{c}p>0\\c_2=0\\d_3=0\end{array}$};
\end{scope}

\begin{scope}[shift={(10.5,0)}]
\draw [step=1, gray, very thin] (-1.25,-1.25) grid (3.25,3.25);
\draw [ ->, black] (-1.25,0)--(3.25,0);
\draw [ ->, black] (0,-1.25)--(0,3.25);
\node[inner sep=0,outer sep=1] (P1a) at (0, 0) {\large \textcolor{blue}{$\bullet$}};
\node[inner sep=0,outer sep=1] (P1b) at (1,-1)  {\large \textcolor{blue}{$\bullet$}};
\node[inner sep=0,outer sep=1] (P2a) at (2,1) {\large \textcolor{blue}{$\bullet$}};
\node[inner sep=0,outer sep=1] (P2b) at (2+1,1+2){\large \textcolor{blue}{$\bullet$}};
\node[inner sep=0,outer sep=1] (P3a) at (1,2) {\large \textcolor{blue}{$\bullet$}};
\node[inner sep=0,outer sep=1] (P3b) at (1-2,2-1){\large \textcolor{blue}{$\bullet$}};
\draw[arrows={-stealth},very thick,blue]  (P1a) to node {} (P1b);
\draw[arrows={-stealth},very thick,blue] (P2a) to node {} (P2b);
\draw[arrows={-stealth},very thick,blue] (P3a) to node {} (P3b);
\node [draw,fill=orange!20,very thick,rounded corners] (box) at (2.1,-0.1) {\scriptsize$\begin{array}{c}p>0\\c_2>0\\d_3<0\end{array}$};
\end{scope}

\begin{scope}[shift={(2,-5)}]
\draw [step=1, gray, very thin] (-3.25,-3.25) grid (1.25,3.25);
\draw [ ->, black] (-3.25,0)--(1.25,0);
\draw [ ->, black] (0,-3.25)--(0,3.25);
\node[inner sep=0,outer sep=1] (P1a) at (0,0)  {\large \textcolor{blue}{$\bullet$}};
\node[inner sep=0,outer sep=1] (P1b) at (1,-1)  {\large \textcolor{blue}{$\bullet$}};
\node[inner sep=0,outer sep=1] (P2a) at (-2,1) {\large \textcolor{blue}{$\bullet$}};
\node[inner sep=0,outer sep=1] (P2b) at (-2-1,1+2){\large \textcolor{blue}{$\bullet$}};
\node[inner sep=0,outer sep=1] (P3a) at (1,-2) {\large \textcolor{blue}{$\bullet$}};
\node[inner sep=0,outer sep=1] (P3b) at (1-2,-2+1){\large \textcolor{blue}{$\bullet$}};
\draw[arrows={-stealth},very thick,blue]  (P1a) to node {} (P1b);
\draw[arrows={-stealth},very thick,blue] (P2a) to node {} (P2b);
\draw[arrows={-stealth},very thick,blue] (P3a) to node {} (P3b);
\node [draw,fill=orange!20,very thick,rounded corners] (box) at (-2.1,-2.1) {\scriptsize$\begin{array}{c}p<0\\c_2<0\\d_3>0\end{array}$};
\end{scope}

\begin{scope}[shift={(7.25,-5)}]
\draw [step=1, gray, very thin] (-3.25,-3.25) grid (1.25,3.25);
\draw [ ->, black] (-3.25,0)--(1.25,0);
\draw [ ->, black] (0,-3.25)--(0,3.25);
\node[inner sep=0,outer sep=1] (P1a) at (0,0)  {\large \textcolor{blue}{$\bullet$}};
\node[inner sep=0,outer sep=1] (P1b) at (1,-1)  {\large \textcolor{blue}{$\bullet$}};
\node[inner sep=0,outer sep=1] (P2a) at (-2,1) {\large \textcolor{blue}{$\bullet$}};
\node[inner sep=0,outer sep=1] (P2b) at (-2,1+2){\large \textcolor{blue}{$\bullet$}};
\node[inner sep=0,outer sep=1] (P3a) at (1,-2) {\large \textcolor{blue}{$\bullet$}};
\node[inner sep=0,outer sep=1] (P3b) at (1-2,-2){\large \textcolor{blue}{$\bullet$}};
\draw[arrows={-stealth},very thick,blue]  (P1a) to node {} (P1b);
\draw[arrows={-stealth},very thick,blue] (P2a) to node {} (P2b);
\draw[arrows={-stealth},very thick,blue] (P3a) to node {} (P3b);
\node [draw,fill=orange!20,very thick,rounded corners] (box) at (-2.1,-2.1) {\scriptsize$\begin{array}{c}p<0\\c_2=0\\d_3=0\end{array}$};
\end{scope}

\begin{scope}[shift={(12.5,-5)}]
\draw [step=1, gray, very thin] (-3.25,-3.25) grid (1.25,3.25);
\draw [ ->, black] (-3.25,0)--(1.25,0);
\draw [ ->, black] (0,-3.25)--(0,3.25);
\node[inner sep=0,outer sep=1] (P1a) at (0,0)  {\large \textcolor{blue}{$\bullet$}};
\node[inner sep=0,outer sep=1] (P1b) at (1,-1)  {\large \textcolor{blue}{$\bullet$}};
\node[inner sep=0,outer sep=1] (P2a) at (-2,1) {\large \textcolor{blue}{$\bullet$}};
\node[inner sep=0,outer sep=1] (P2b) at (-2+1,1+2){\large \textcolor{blue}{$\bullet$}};
\node[inner sep=0,outer sep=1] (P3a) at (1,-2) {\large \textcolor{blue}{$\bullet$}};
\node[inner sep=0,outer sep=1] (P3b) at (1-2,-2-1){\large \textcolor{blue}{$\bullet$}};
\draw[arrows={-stealth},very thick,blue]  (P1a) to node {} (P1b);
\draw[arrows={-stealth},very thick,blue] (P2a) to node {} (P2b);
\draw[arrows={-stealth},very thick,blue] (P3a) to node {} (P3b);
\node [draw,fill=orange!20,very thick,rounded corners] (box) at (-2.1,-2.1) {\scriptsize$\begin{array}{c}p<0\\c_2>0\\d_3<0\end{array}$};
\end{scope}

\node[] at (6.25,6.2) {$c_1=1$, $d_1=-1$, $d_2=2$, $c_3=-2$};

\draw[ultra thin] (-1,5.85) -- (13.5,5.85);


\node[] at (1,5.5) {$c_2=-1$, $d_3=1$};

\node[] at (1,4.5) {$\begin{aligned} \dot{x} &= 3-x^p y^q -2x^q y^p \\ \dot{y} &= -3+2x^p y^q +x^q y^p \end{aligned}$};


\node[] at (6.25,5.5) {$c_2=0$, $d_3=0$};

\node[] at (6.25,4.5) {$\begin{aligned} \dot{x} &= 1-x^q y^p \\ \dot{y} &= -1 + x^p y^q \end{aligned}$};


\node[] at (11.5,5.5) {$c_2=1$, $d_3=-1$};

\node[] at (11.5,4.5) {$\begin{aligned} \dot{x} &= 1+x^p y^q -2x^q y^p \\ \dot{y} &= -1+2x^p y^q -x^q y^p \end{aligned}$};

\end{tikzpicture}
\end{center}
\caption{Some reaction networks that all fall under \Cref{cor:three_reactions_reversible_center}, along with the differential equation \eqref{eq:three_reactions_ode_scaled_reversible}. Among the graphs, in the top row we have $p>0$ and $-p<q<p$, while in the bottom row we have $p<0$ and $p<q<-p$.}
\label{fig:three_reactions_reversible_center}
\end{figure}
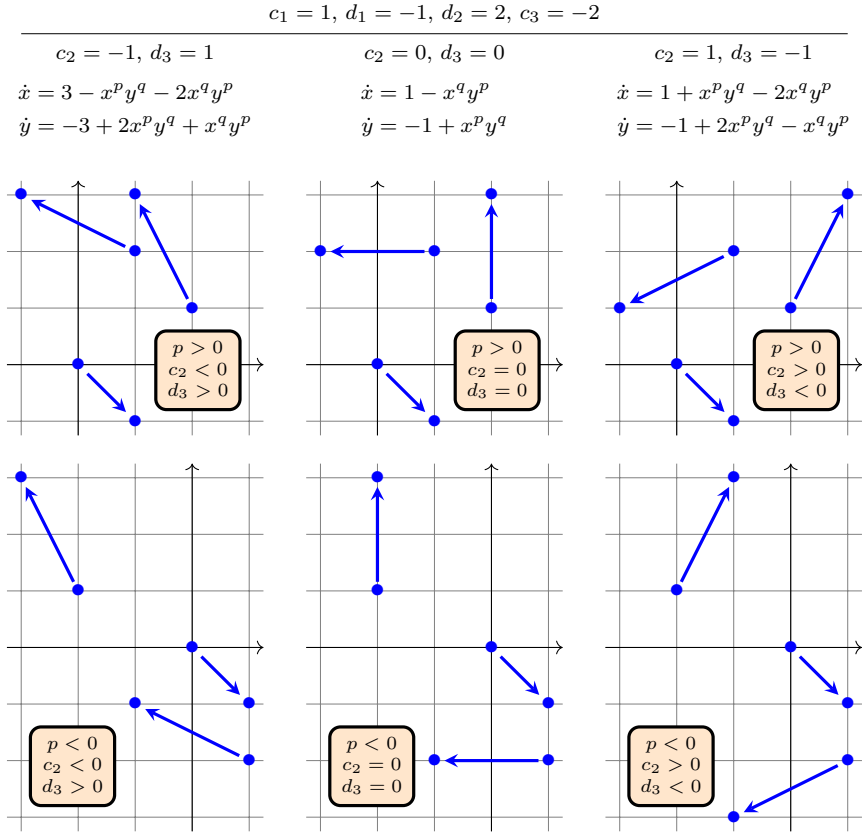

Now fix $p$, $q$, $c_1$, $c_2$, $c_3$, $d_1$, $d_2$, $d_3$ such that all the assumptions of \Cref{cor:three_reactions_reversible_center} are fulfilled, and consider the mass-action system \eqref{eq:three_reactions_ode} with \eqref{eq:three_reactions_reversible_ab_pq}. How to choose $\kappa_1$, $\kappa_2$, $\kappa_3$ in order that the unique positive equilibrium is a center? First note that there exists an $(\overline{x},\overline{y})\in\mathbb{R}^2_+$ for which \eqref{eq:three_reactions_kappa_kappabar} with \eqref{eq:three_reactions_reversible_ab_pq} holds if and only if
\begin{align*}
\kappa_1 = \overline{\kappa}_1 \text{ and } \frac{\kappa_3}{\kappa_2}=\frac{\overline{\kappa}_3}{\overline{\kappa}_2}K^{p-q}.
\end{align*}
By \eqref{eq:three_reactions_reversibiliy_condition}, $\overline{\kappa}_1$ is arbitrary and $\frac{\overline{\kappa}_3}{\overline{\kappa}_2}K^{p-q}=-\frac{c_2}{d_3}\left(-\frac{c_1}{d_1}\right)^{p-q-1}$. Thus, the answer to the above question is that one has to choose $\kappa_1$, $\kappa_2$, $\kappa_3$ such that
\begin{align*}
\kappa_1 \text{ is arbitrary and } \frac{\kappa_3}{\kappa_2}=-\frac{c_2}{d_3}\left(-\frac{c_1}{d_1}\right)^{p-q-1}.
\end{align*}

Finally, we remark that the condition $\left\lvert\frac{d_1}{c_1}\right\rvert = \sqrt{\left\lvert\frac{d_2}{c_2}\right\rvert \left\lvert\frac{d_3}{c_3}\right\rvert}$ in \Cref{cor:three_reactions_reversible_center} sheds light on why we had to set $(a_4,b_4)=\left(q-p,p+\frac{q^2}{p}\right)$ in \Cref{subsec:chain_reversible_center}. With this choice, the absolute values of the slopes of the three reactions are $\left\lvert\frac{q}{p}\right\rvert$, $1$, $\frac{q^2}{p^2}$, the first one being the geometric mean of the latter two.

\subsection{Li\'enard center}
\label{subsec:three_reactions_lienard_center}

Let us consider now the mass-action system \eqref{eq:three_reactions_network} with 
\begin{align*}
(a_1,b_1)=(1,0), (a_2,b_2)=\left(0,-\frac{1}{2}\right), (a_3,b_3)=(0,-2).
\end{align*}
Its associated scaled differential equation is then
\begin{align}
\label{eq:three_reactions_ode_scaled_lienard}
\begin{split}
\dot{x} &= c_1\overline{\kappa}_1 x + c_2\overline{\kappa}_2 y^{-\frac{1}{2}} + c_3\overline{\kappa}_3 y^{-2}, \\
\dot{y} &= K(d_1\overline{\kappa}_1 x + d_2\overline{\kappa}_2  y^{-\frac{1}{2}}+ d_3\overline{\kappa}_3 y^{-2}).
\end{split}
\end{align}

\begin{proposition}
\label{prop:three_reactions_lienard_center}
Consider the differential equation \eqref{eq:three_reactions_ode_scaled_lienard} with \eqref{eq:three_reactions_kappasubst}. Assume that $\frac{K}{\lambda}>0$ and
\begin{align}
\label{eq:three_reactions_lienard_condition}
c_1\overline{\kappa}_1=K d_2 \overline{\kappa}_2=4K d_3 \overline{\kappa}_3\neq0
\end{align}
hold. Then the equilibrium $(1,1)$ is a center.
\end{proposition}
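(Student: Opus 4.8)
The plan is to reduce \eqref{eq:three_reactions_ode_scaled_lienard} near $(1,1)$ to a scalar Li\'enard equation and then exhibit a symmetry that forces a center. Since $(1,1)$ is an equilibrium, evaluating the right-hand sides there gives $c_1\overline{\kappa}_1+c_2\overline{\kappa}_2+c_3\overline{\kappa}_3=0$ and $d_1\overline{\kappa}_1+d_2\overline{\kappa}_2+d_3\overline{\kappa}_3=0$. Writing $A:=c_1\overline{\kappa}_1=Kd_2\overline{\kappa}_2=4Kd_3\overline{\kappa}_3\ne0$ and combining with \eqref{eq:three_reactions_lienard_condition}, these yield $Kd_1\overline{\kappa}_1=-\tfrac54 A$ and $c_2\overline{\kappa}_2+c_3\overline{\kappa}_3=-A$. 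Therefore the variable $w:=y+\tfrac54 x$ satisfies $\dot w=\bigl(A+\tfrac54 c_2\overline{\kappa}_2\bigr)\bigl(y^{-1/2}-y^{-2}\bigr)$ — the terms in $x$ cancel — so in coordinates $(y,w)$ the system is of Li\'enard type, and eliminating $w$ yields
\begin{align*}
\ddot y+f(y)\dot y+g(y)=0,\qquad f(y)=A\Bigl(-1+\tfrac12 y^{-3/2}+\tfrac12 y^{-3}\Bigr),\quad g(y)=C\bigl(y^{-1/2}-y^{-2}\bigr),
\end{align*}
where $C:=A\bigl(A+\tfrac54 c_2\overline{\kappa}_2\bigr)$. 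One checks $f(1)=0$ (equivalently $\tr J=0$) and $g(1)=0$, $g'(1)=\tfrac32 C$; moreover $\det J=\tfrac32 C$, which is positive by \eqref{eq:three_reactions_detJ} and the hypothesis $\tfrac K\lambda>0$, so $C>0$.

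Next I would integrate: $F(y):=\int_1^y f=A\bigl(\tfrac94-y-y^{-1/2}-\tfrac14 y^{-2}\bigr)$ and $G(y):=\int_1^y g=C\bigl(2\sqrt y+y^{-1}-3\bigr)$. The crucial point — and the reason the exponents $-\tfrac12,-2$ and the coefficient $4$ in \eqref{eq:three_reactions_lienard_condition} are exactly these — is the identity $y+y^{-1/2}+\tfrac14 y^{-2}=\tfrac14\bigl(2\sqrt y+y^{-1}\bigr)^2$. Setting $m(y):=2\sqrt y+y^{-1}$, this says $F(y)=A\bigl(\tfrac94-\tfrac14 m(y)^2\bigr)$ and $G(y)=C\bigl(m(y)-3\bigr)$, so $F$ and $G$ are both expressible through the single quantity $m$. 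Since $m'(1)=0$ and $m''(1)=\tfrac32>0$, the map $m$ has a nondegenerate minimum at $y=1$; hence there is an analytic involution $\sigma$ near $y=1$ with $\sigma(1)=1$, $\sigma\ne\mathrm{id}$ and $m\circ\sigma=m$, and consequently $F\circ\sigma=F$ and $G\circ\sigma=G$.

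Finally I would invoke the Li\'enard center criterion: for $\ddot y+f(y)\dot y+g(y)=0$ with $g(1)=0$ and $g'(1)>0$, if the involution $\sigma$ that symmetrizes $G$ also symmetrizes $F$, then $y=1$ is a center. For a self-contained argument, put $z:=\sgn(y-1)\sqrt{2G(y)}$, which (using $C>0$) is an analytic diffeomorphism near $y=1$; then $m(y)=3+\tfrac{z^2}{2C}$, hence $F$, are even functions of $z$, and after the positive time rescaling $d\tau=z'(y)\,dt$ the Li\'enard-plane system $\dot y=v-F(y),\ \dot v=-g(y)$ becomes $z'=v-\widehat F(z),\ v'=-z$ with $\widehat F$ even — reversible with respect to the line $z=0$, hence with a center at the origin. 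Transporting back through the affine change $(x,y)\mapsto(y,w)$, the change $y\mapsto z$, and the time rescaling (all local diffeomorphisms preserving closed orbits near the equilibrium) shows that $(1,1)$ is a center of \eqref{eq:three_reactions_ode_scaled_lienard}.

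The main obstacle is the middle step: recognizing the identity $y+y^{-1/2}+\tfrac14 y^{-2}=\tfrac14(2\sqrt y+y^{-1})^2$, which makes $F$ and $G$ share the symmetry variable $m(y)$, and then checking that the side conditions required to apply the Li\'enard center criterion ($\tr J=0$ and $\det J>0$) indeed follow from \eqref{eq:three_reactions_lienard_condition} together with $\tfrac K\lambda>0$.
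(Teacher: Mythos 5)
Your proposal is correct and follows essentially the same route as the paper: reduce \eqref{eq:three_reactions_ode_scaled_lienard} to the Li\'enard equation $\ddot y+f(y)\dot y+g(y)=0$, integrate to get $F$ and $G$, and observe that \eqref{eq:three_reactions_lienard_condition} forces $F=\Phi\circ G$ with $\Phi$ a quadratic polynomial vanishing at $0$ (your identity $y+y^{-1/2}+\tfrac14y^{-2}=\tfrac14(2\sqrt y+y^{-1})^2$ is exactly the ``short calculation'' behind the paper's $\alpha z^2+\beta z$). The only difference is that where the paper invokes the Christopher--Li center criterion as a black box, you additionally supply a self-contained proof of it in this case via the change of variable $z=\sgn(y-1)\sqrt{2G(y)}$ and reversibility; all your intermediate computations ($Kd_1\overline{\kappa}_1=-\tfrac54A$, the cancellation in $\dot w$, $C=\tfrac{K}{\lambda}\overline{\kappa}_1\overline{\kappa}_2\overline{\kappa}_3>0$) check out.
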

\begin{proof}
The determinant and the trace of the Jacobian matrix at $(1,1)$ are
\begin{align*}
\frac{3}{2\lambda}K \overline{\kappa}_1\overline{\kappa}_2\overline{\kappa}_3\text{ and }
c_1\overline{\kappa}_1-\frac{1}{2}K d_2 \overline{\kappa}_2 -2 K d_3 \overline{\kappa}_3,
\end{align*}
respectively. By the assumptions, the former is positive, the latter is zero, and therefore, the eigenvalues are purely imaginary.

Shifting the equilibrium to the origin yields
\begin{align}
\label{eq:three_reactions_ode_scaled_lienard2}
\begin{split}
\dot{x} &= c_1\overline{\kappa}_1 (x+1) + c_2\overline{\kappa}_2 (y+1)^{-\frac{1}{2}} + c_3\overline{\kappa}_3 (y+1)^{-2}, \\
\dot{y} &= K[d_1\overline{\kappa}_1 (x+1) + d_2\overline{\kappa}_2  (y+1)^{-\frac{1}{2}}+ d_3\overline{\kappa}_3(y+1)^{-2}].
\end{split}
\end{align}
Differentiation of the second equation w.r.t. time and then application of each of the two equations once yields that \eqref{eq:three_reactions_ode_scaled_lienard2} is equivalent to the Li\'enard equation
\begin{align}
\label{eq:three_reactions_lienard_yddot}
\ddot{y} + f(y) \dot{y} + g(y) = 0,
\end{align}
where
\begin{align*}
f(y) &= - c_1\overline{\kappa}_1 + \frac{1}{2}Kd_2\overline{\kappa}_2  (y+1)^{-\frac{3}{2}} + 2Kd_3\overline{\kappa}_3  (y+1)^{-3},\\
g(y) &= \frac{1}{\lambda}K\overline{\kappa}_1\overline{\kappa}_2\overline{\kappa}_3\left[(y+1)^{-\frac{1}{2}}-(y+1)^{-2}\right].
\end{align*}
By \cite[Theorem 4.1]{christopher:li:2007}, the origin is a center for \eqref{eq:three_reactions_lienard_yddot} if and only if $F=\Phi\circ G$ for some analytic function $\Phi$ with $\Phi(0)=0$, where $F(x)=\int_0^x f(y)\mathrm{d}y$ and $G(x)=\int_0^x g(y)\mathrm{d}y$. Now
\begin{align*}
F(x) &= -c_1\overline{\kappa}_1 x - K d_2 \overline{\kappa}_2 \left[(x+1)^{-\frac{1}{2}}-1\right] - K d_3 \overline{\kappa}_3 \left[(x+1)^{-2}-1\right],\\
G(x) &= \frac{1}{\lambda}K\overline{\kappa}_1\overline{\kappa}_2\overline{\kappa}_3\left[2(x+1)^{\frac{1}{2}}+(x+1)^{-1}-3\right].
\end{align*}
A short calculation shows that under the hypothesis \eqref{eq:three_reactions_lienard_condition}, $F=\Phi\circ G$ holds with $\Phi(z)=\alpha z^2 +\beta z$, where
\begin{align*}
\alpha=-\frac{\lambda^2}{4}\frac{c_1\overline{\kappa}_1}{(K\overline{\kappa}_1\overline{\kappa}_2\overline{\kappa}_3)^2} \text{ and } \beta=-\frac{3\lambda}{2}\frac{c_1\overline{\kappa}_1}{K\overline{\kappa}_1\overline{\kappa}_2\overline{\kappa}_3}.
\end{align*}
This concludes the proof. \qed
\end{proof}

\begin{figure}[b]
\begin{center}
\input{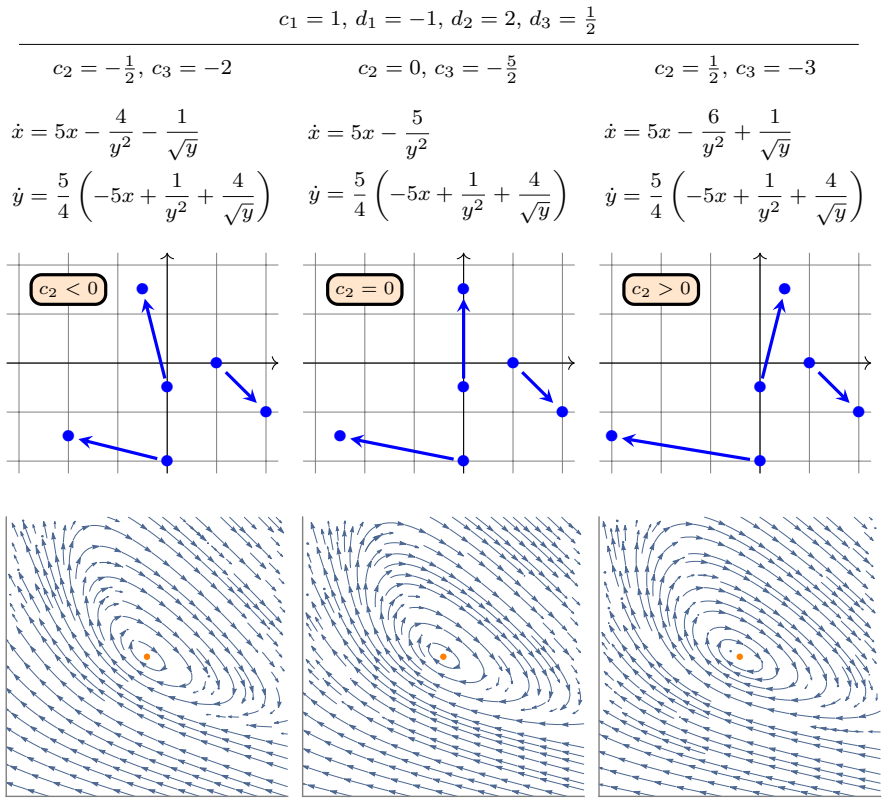}
\end{center}
\caption{Some reaction networks that all fall under \Cref{cor:three_reactions_lienard_center}, along with the differential equation \eqref{eq:three_reactions_ode_scaled_lienard}, and the corresponding phase portraits.}
\label{fig:three_reactions_lienard}
\end{figure}

\begin{corollary}
\label{cor:three_reactions_lienard_center}
Consider the differential equation \eqref{eq:three_reactions_ode_scaled_lienard} with \eqref{eq:three_reactions_kappasubst}. Assume that $K>0$, $\sgn c_1 = -\sgn d_1 = \sgn d_2 = \sgn d_3 \neq0$, and
\begin{align*}
\frac{c_3}{d_3}<\frac{c_1}{d_1}<\frac{c_2}{d_2} \text{ and } \frac{c_1}{d_1}=-\frac{4}{5}K=\frac{4}{5}\frac{c_2}{d_2}+\frac{1}{5}\frac{c_3}{d_3}
\end{align*}
hold. Then the equilibrium $(1,1)$ is a center.
\end{corollary}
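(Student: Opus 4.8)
The plan is to deduce the corollary from \Cref{prop:three_reactions_lienard_center}: under the stated hypotheses it suffices to check that $\frac{K}{\lambda}>0$ and that \eqref{eq:three_reactions_lienard_condition} holds, where $\lambda$ is the constant from \eqref{eq:three_reactions_kappasubst} and $\overline{\kappa}_1,\overline{\kappa}_2,\overline{\kappa}_3$ are as in that display.

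First I would pin down the sign of $\lambda$. Since $\lambda$ has the same sign as the common value in \eqref{eq:three_reactions_common_sign}, it suffices to show that the three determinants $c_2d_3-c_3d_2$, $c_3d_1-c_1d_3$, $c_1d_2-c_2d_1$ are all positive. Writing each of them as $d_id_j$ times a difference of the ratios $\frac{c_i}{d_i}$, namely
\begin{align*}
c_2d_3-c_3d_2=d_2d_3\bigl(\tfrac{c_2}{d_2}-\tfrac{c_3}{d_3}\bigr),\qquad c_3d_1-c_1d_3=d_1d_3\bigl(\tfrac{c_3}{d_3}-\tfrac{c_1}{d_1}\bigr),\qquad c_1d_2-c_2d_1=d_1d_2\bigl(\tfrac{c_1}{d_1}-\tfrac{c_2}{d_2}\bigr),
\end{align*}
the sign of each product $d_id_j$ is determined by $\sgn c_1=-\sgn d_1=\sgn d_2=\sgn d_3\neq0$ (and it comes out the same whether the common sign of $c_1,d_2,d_3$ is $+$ or $-$), while the sign of each ratio-difference is fixed by the chain $\frac{c_3}{d_3}<\frac{c_1}{d_1}<\frac{c_2}{d_2}$. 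This gives that all three determinants are positive, so $\lambda>0$, and together with $K>0$ this yields $\frac{K}{\lambda}>0$. As a by-product this also verifies the non-degeneracy \eqref{eq:three_reactions_nondeg} for the present network and the existence of the unique positive equilibrium.

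Next I would verify \eqref{eq:three_reactions_lienard_condition}. Substituting \eqref{eq:three_reactions_kappasubst} and cancelling the nonzero factor $\lambda$, the required equalities $c_1\overline{\kappa}_1=Kd_2\overline{\kappa}_2=4Kd_3\overline{\kappa}_3$ become the polynomial identities
\begin{align*}
c_1(c_2d_3-c_3d_2)=Kd_2(c_3d_1-c_1d_3)=4Kd_3(c_1d_2-c_2d_1).
\end{align*}
Using the scalar hypothesis $\frac{c_1}{d_1}=-\frac45 K$, i.e.\ $c_1=-\frac45 Kd_1$, and cancelling the nonzero factor $Kd_1$, each of the two equalities above collapses to the single relation $4c_2d_3+c_3d_2=-4Kd_2d_3$, which is precisely what one obtains by eliminating $\frac{c_1}{d_1}$ between the two scalar hypotheses $\frac{c_1}{d_1}=-\frac45 K$ and $\frac{c_1}{d_1}=\frac45\frac{c_2}{d_2}+\frac15\frac{c_3}{d_3}$ and clearing denominators. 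Nonvanishing in \eqref{eq:three_reactions_lienard_condition} is immediate, since $c_1\neq0$ and $c_2d_3-c_3d_2>0$ by the first step. With $\frac{K}{\lambda}>0$ and \eqref{eq:three_reactions_lienard_condition} established, \Cref{prop:three_reactions_lienard_center} yields that $(1,1)$ is a center.

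The only step requiring care is the sign bookkeeping in the first part: one must check that the ratio-inequalities together with the prescribed signs force all three $2\times2$ determinants to be positive in every admissible configuration, corresponding to the three qualitatively different cases $c_2<0$, $c_2=0$, $c_2>0$ illustrated in \Cref{fig:three_reactions_lienard}. Everything else is a routine verification.
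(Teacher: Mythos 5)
Your proposal is correct and follows the same route as the paper's proof: establish that the three determinants $c_2d_3-c_3d_2$, $c_3d_1-c_1d_3$, $c_1d_2-c_2d_1$ are positive (hence $\lambda>0$ and $\frac{K}{\lambda}>0$), verify \eqref{eq:three_reactions_lienard_condition} from the two scalar hypotheses, and invoke \Cref{prop:three_reactions_lienard_center}. The only difference is that you spell out the sign bookkeeping and the algebraic reduction to $4c_2d_3+c_3d_2=-4Kd_2d_3$, which the paper compresses into ``straightforward to verify.''
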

\begin{proof}
By the assumptions, each of $c_2d_3-c_3d_2$, $c_3d_1-c_1d_3$, $c_1d_2-c_2d_1$ is positive, and hence $\lambda>0$. Thus, $\frac{K}{\lambda}$ is also positive. Further, under the assumptions of this corollary, it is straightforward to verify the condition \eqref{eq:three_reactions_lienard_condition}. \Cref{prop:three_reactions_lienard_center} then concludes the proof. \qed
\end{proof}

We depicted in \Cref{fig:three_reactions_lienard} some reaction networks that all fall under \Cref{cor:three_reactions_lienard_center}.

\section{Zigzag}
\label{sec:zigzag}

We conclude with an example of a reaction network, where the existence of a positive equilibrium does depend on the choice of the rate constants, a phenomenon that appeared for neither of the networks in \Cref{sec:quadrangle,sec:chain,sec:three_reactions}.

The mass-action system under investigation in this section, along with its associated differential equation takes the form
\begin{center}
\begin{tikzpicture}

\draw [step=1, gray, very thin] (-0.25,-0.25) grid (3.25,3.25);
\draw [ ->, black] (-0.25,0)--(3.25,0);
\draw [ ->, black] (0,-0.25)--(0,3.25);

\node[inner sep=0,outer sep=1] (P1) at (0,3) {\large \textcolor{blue}{$\bullet$}};
\node[inner sep=0,outer sep=1] (P2) at (1,2) {\large \textcolor{blue}{$\bullet$}};
\node[inner sep=0,outer sep=1] (P3) at (0,1) {\large \textcolor{blue}{$\bullet$}};
\node[inner sep=0,outer sep=1] (P4) at (1,0) {\large \textcolor{blue}{$\bullet$}};

\node [below left]  at (P1) {$3\mathsf{Y}$};
\node [below right] at (P2) {$\mathsf{X}+2\mathsf{Y}$};
\node [below left]  at (P3) {$\mathsf{Y}$};
\node [below right] at (P4) {$\mathsf{X}$};

\draw[arrows={-stealth},very thick,blue,right,pos=0.3,transform canvas={xshift=1.5pt, yshift=1.5pt}]  (P1) to node {$1$} (P2);
\draw[arrows={-stealth},very thick,blue,left,pos=0.3,transform canvas={xshift=-1.5pt, yshift=-1.5pt}]  (P2) to node {$2$} (P1);
\draw[arrows={-stealth},very thick,blue,right,pos=0.6,transform canvas={xshift=1.5pt, yshift=-1.5pt}]       (P2) to node {$1$} (P3);
\draw[arrows={-stealth},very thick,blue,left,pos=0.6,transform canvas={xshift=-1.5pt, yshift=1.5pt}] (P3) to node {$1$} (P2);
\draw[arrows={-stealth},very thick,blue,left,pos=0.6]        (P3) to node {$\kappa$} (P4);

\node [] at (4,3/2) {and};

\node [] at (7,3/2) {$\begin{aligned}
\dot{x} &= y^3 - 3xy^2+(1+\kappa)y, \\
\dot{y} &= -y^3 + xy^2+(1-\kappa)y.
\end{aligned}$};

\end{tikzpicture}
\end{center}
There is a unique positive equilibrium at $\left(\frac{1}{\sqrt{2-\kappa}}, \sqrt{2-\kappa}\right)$ for $\kappa < 2$ and no positive equilibrium for $\kappa \geq 2$. The determinant of the Jacobian matrix at the equilibrium is positive, while its trace is $5\kappa-9$, which becomes positive for $\kappa > \frac95$. The Andronov--Hopf bifurcation at $\kappa = \frac95$ is subcritical, since $L_1 = \frac{5\pi}{13} > 0$.

The $x$--axis is invariant, consists of equilibria, and attracts nearby points from ${\mathbb R}^2_+$ if $\kappa > 1$. For $\kappa > \frac95$ it seems that all orbits except the positive equilibrium converge to the $x$--axis.


\newpage
\bibliographystyle{spmpsci}      
\bibliography{biblio}   

\end{document}